 \newcommand{\Aut}{\mathrm{Aut}}
\newcommand{\bbF}{\mathbb{F}}
\newcommand{\Cay}{\mathrm{Cay}} \newcommand{\Cos}{\mathrm{Cos}}
\newcommand{\GL}{\mathrm{GL}}
\newcommand{\PG}{\mathrm{PG}} \newcommand{\PGL}{\mathrm{PGL}} \newcommand{\PSL}{\mathrm{PSL}}
\newcommand{\Val}{\mathrm{Val}}
\newcommand{\SL}{\mathrm{SL}}  \newcommand{\Sym}{\mathrm{Sym}}
\newtheorem{theorem}{Theorem}[section]
\newtheorem{lemma}[theorem]{Lemma}
\theoremstyle{definition}
\newtheorem{question}[theorem]{Question}
\newtheorem{conjecture}[theorem]{Conjecture}
\newtheorem{construction}[theorem]{Construction}
\newtheorem{problem}[theorem]{Problem}
\newtheorem*{remark}{Remark}
\begin{document}

\title[Non-diagonalizable digraphs]{A solution to Babai's problems on digraphs with non-diagonalizable adjacency matrix}

\thanks{*Corresponding author}

\author[Li]{Yuxuan Li}
\address{School of Mathematics and Statistics\\The University of Melbourne\\Parkville, VIC 3010\\Australia}
\email{yuxuan11@student.unimelb.edu.au}

\author[Xia]{Binzhou Xia}
\address{School of Mathematics and Statistics\\The University of Melbourne\\Parkville, VIC 3010\\Australia}
\email{binzhoux@unimelb.edu.au}

\author[Zhou]{Sanming Zhou}
\address{School of Mathematics and Statistics\\The University of Melbourne\\Parkville, VIC 3010\\Australia}
\email{sanming@unimelb.edu.au}

\author[Zhu]{Wenying Zhu*}
\address{Laboratory of Mathematics and Complex Systems (Ministry of Education)\\School of Mathematical Sciences\\Beijing Normal University\\
Beijing, 100875\\China}
\email{zfwenying@mail.bnu.edu.cn}

\begin{abstract}
The fact that the adjacency matrix of every finite graph is diagonalizable plays a fundamental role in spectral graph theory. Since this fact does not hold in general for digraphs, it is natural to ask whether it holds for digraphs with certain level of symmetry. Interest in this question dates back to the early 1980s, when P.~J.~Cameron asked for the existence of arc-transitive digraphs with non-diagonalizable adjacency matrix. This was answered in the affirmative by L.~Babai in 1985. Then Babai posed the open problems of constructing a 2-arc-transitive digraph and a vertex-primitive digraph whose adjacency matrices are not diagonalizable. In this paper, we solve Babai's problems by constructing an infinite family of $s$-arc-transitive digraphs for each integer $s\geq2$, and an infinite family of vertex-primitive digraphs, both of whose adjacency matrices are non-diagonalizable.

\textit{Key words:} non-diagonalizable adjacency matrix; vertex-primitive digraph; $s$-arc-transitive digraph

\textit{MSC2020:} 05C20, 05C25, 05C50
\end{abstract}
\maketitle
\section{Introduction}\label{sec1}

In this paper, a digraph $\Gamma$ is a pair $(V(\Gamma),\to)$ with $V(\Gamma)$ a set of vertices and $\to$ an irreflexive binary relation on $V(\Gamma)$, and all digraphs are assumed to be finite.
Suppose that $\Gamma$ has $n$ vertices $v_1,v_2,\ldots,v_n$.
The \emph{adjacency matrix} of $\Gamma$, denoted by $A(\Gamma)$, is the square matrix of order $n$ whose $(i,j)$-entry is $1$ if $v_i\to v_j$ and $0$ otherwise.
Note that the adjacency matrices of $\Gamma$ under different labellings of its vertex set are similar and hence have the same eigenvalues with multiplicities.
The eigenvalues of $A(\Gamma)$ are called the \emph{eigenvalues} of $\Gamma$.
The digraph $\Gamma$ is said to be \emph{diagonalizable} if its adjacency matrix is diagonalizable.

We say that $\Gamma$ is an \emph{undirected digraph} or a \emph{graph} if the binary relation $\to$ is symmetric.
For a graph, its adjacency matrix is symmetric, which makes it always diagonalizable.
Due to this essential property, the famous Courant-Fischer-Weyl Min-Max Theorem and Cauchy Interlacing Theorem, as powerful tools, are used frequently to deal with eigenvalues of graphs; refer to \cite{CRS2010} and~ \cite{GR2001}.
Compared with those of graphs, results about eigenvalues of digraphs are sparse due to the obvious fact that their adjacency matrices are not necessarily diagonalizable. It is natural to ask whether digraphs with certain prescribed properties are diagonalizable.
For example, some digraph properties in terms of association schemes guarantee that the digraph is diagonalizable; see~\cite{Suzuki2004,Wang2004} for instance.

For a non-negative integer $s$, an \emph{$s$-arc} of $\Gamma$ is a sequence $v_0,v_1,\dots,v_s$ of $s+1$ vertices with $v_i\rightarrow v_{i+1}$ for each $i\in\{0,1,\dots,s-1\}$. In particular, a \emph{$0$-arc} is a vertex of $\Gamma$. We say that $\Gamma$ is \emph{$s$-arc-transitive} if the automorphism group $\Aut(\Gamma)$ of $\Gamma$ acts transitively on the set of $s$-arcs of $\Gamma$.
The $0$-arc-transitive and $1$-arc-transitive digraphs are simply said to be \emph{vertex-transitive} and \emph{arc-transitive}, respectively.
For a finite group $G$ and a nonempty subset $S$ of $G\setminus\{1\}$, the \emph{Cayley digraph} on $G$ with \emph{connection set} $S$, denoted by $\Cay(G,S)$, is defined to be the digraph with vertex set $G$ such that $x\to y$ if and only if $yx^{-1}\in S$.

It is clear that every Cayley digraph is vertex-transitive as its automorphism group has a regular subgroup.
The first result exploring the relationship between symmetry and diagonalizability of digraphs was given by Godsil~\cite{Godsil1982} in 1982.
He proved that for each digraph $\Sigma$ with maximum degree greater than one, there exists a Cayley digraph $\Gamma$ such that the minimal polynomial of $A(\Sigma)$ divides that of $A(\Gamma)$. This implies the existence of non-diagonalizable Cayley digraphs, and thus non-diagonalizable vertex-transitive digraphs. On the other hand, a sufficient condition for a Cayley digraph to be diagonalizable is given by Babai in~\cite{Babai1995}, that is, if the connection set $S$ is closed under conjugation then $\Cay(G,S)$ is diagonalizable.

The digraph $\Gamma$ is said to be \emph{regular} if there exists a positive integer $d$, called the \emph{valency} of $\Gamma$ and denoted by $\Val(\Gamma)$, such that every vertex of $\Gamma$ has $d$ out-neighbours and $d$ in-neighbours.
Note that a regular $(s+1)$-arc-transitive digraph is also $s$-arc-transitive. In particular, a regular arc-transitive digraph is necessarily vertex-transitive.
In 1983, Cameron~\cite{Cameron1983} asked about the existence of non-diagonalizable arc-transitive digraphs. This was answered in the affirmative by Babai~\cite{Babai1985} in 1985.
In fact, Babai~\cite{Babai1985} proved a stronger result that for each integral matrix $A$, there exists an arc-transitive digraph $\Gamma$ such that the minimal polynomial of $A$ divides that of $A(\Gamma)$. In the same paper, he further posed the following open problems.
Recall that a permutation group $G$ on a set $\Omega$ is said to be primitive if $G$ does not preserve any nontrivial and proper partition of $\Omega$. We say that $\Gamma$ is \emph{vertex-primitive} if $\mathrm{Aut}(\Gamma)$ acts primitively on $V(\Gamma)$.

\begin{problem}\cite[Problem 1.4]{Babai1985}\label{pro1}
Construct a non-diagonalizable $2$-arc-transitive digraph and a non-diagonalizable vertex-primitive digraph.
\end{problem}

We remark that, for every positive integer $s$, the existence of non-diagonalizable $s$-arc-transitive digraphs can be deduced from the combination of some known results.
In 1975, Hoffman~\cite{Hoffman1975} showed that for each integral matrix $A$, there exists a digraph $\Gamma$ such that the minimal polynomial of $A$ divides that of $A(\Gamma)$.
For each digraph $\Gamma$, Godsil~\cite{Godsil1982} proved the existence of a regular digraph $\Sigma$ with the property that the minimal polynomial of $A(\Gamma)$ divides that of $A(\Sigma)$.
Moreover, for every regular digraph $\Sigma$, a result from Mansilla and Serra~\cite{MS2001} in 2001 shows that there exists an $s$-arc-transitive covering digraph $\Sigma_s$ of $\Sigma$ for each positive integer $s$.
Since the minimal polynomial of a digraph divides those of its covering digraphs (see~\cite[Corollary~3.3]{Babai1985}), we derive the existence of an $s$-arc-transitive digraph $\Sigma_s$ for each integral matrix $A$ and positive integer $s$ such that the minimal polynomial of $A$ divides that of $\Sigma_s$. This proves the existence of non-diagonalizable $s$-arc-transitive digraphs for each $s\ge1$. However, such a proof is not constructive.

In this paper, we solve Problem~\ref{pro1} by constructing infinite families of digraphs with the required properties.
To build an infinite family of digraphs from an existing one, we use the \emph{tensor product} $\Gamma\times\Sigma$ of digraphs $\Gamma$ and $\Sigma$, where $\Gamma\times\Sigma$  is the digraph with vertex set $V(\Gamma)\times V(\Sigma)$ such that $(u_1,v_1)\rightarrow(u_2,v_2)$ if and only if $u_1\rightarrow u_2$ in $\Gamma$ and $v_1\rightarrow v_2$ in $\Sigma$. For an integer $n\geq1$, denote by $\Gamma^{\times n}$ the tensor product of $n$ copies of digraph $\Gamma$.
Our main result gives infinite families of non-diagonalizable $s$-arc-transitive digraphs and non-diagonalizable vertex-primitive digraphs. The basic digraphs in these two families are as follows.

\begin{construction}\label{CONs-arc}
For each integer $s\geq2$, let $a_s=(2s-1,2s)(4s-1,4s)\in\Sym(4s)$, let $b_s=(1,3,5,\ldots,4s-1,2,4,6,\ldots,4s)\in\Sym(4s)$, let $R_s=\langle a_s,b_s\rangle$ be the group generated by $a_s$ and $b_s$, and let $\Gamma_s=\Cay(R_s,\{a_sb_s,b_s\})$.
\end{construction}

\begin{construction}\label{CONver-pri}
Let $R=\langle a,b\mid a^7=b^3=1,\,b^{-1}ab=a^2\rangle\times\langle c,d\mid c^7=d^3=1,\,d^{-1}cd=c^2\rangle$, let $\gamma$ be the automorphism of $R$ interchanging $a$ with $c$ and $b$ with $d$, let
\[
S=(S_1\cup S_1^{-1})(S_3\cup S_3^{-1})^\gamma\cup(S_3\cup S_3^{-1})(S_1\cup S_1^{-1})^\gamma\cup S_1S_2^\gamma\cup S_2S_1^\gamma\cup S_1^{-1}S_4^\gamma\cup S_4(S_1^{-1})^\gamma,
\]
where
\begin{align*}
S_1&=\{a,\,a^5,\,a^6b,\,a^6b^2\},\ \ S_2=\{ab,\,(ab)^{-1}\},\\
S_3&=\{a^3,\,b,\,ab^2,\,a^4b^2\},\ \ S_4=\{a^2b,\,(a^2b)^{-1}\},
\end{align*}
and let $\Sigma=\Cay(R,S)$.
\end{construction}

\begin{remark}
We will see in Lemmas~\ref{LEMcayR} and \ref{LEMRS} that the group $R_s$ in Construction~\ref{CONs-arc} is an extension of the elementary abelian group $C_2^s$ by the cyclic group $C_{2s}$, while $R$ and $S$ in Construction~\ref{CONver-pri} satisfy $R\cong (C_7\rtimes C_3)^2$ and $|S|=160$.
\end{remark}

Our main result is as follows.

\begin{theorem}\label{THEinfinite}
For all positive integers $n$ and $s\geq2$, with $\Gamma_s$ and $\Sigma$ defined in Constructions~$\ref{CONs-arc}$ and~$\ref{CONver-pri}$, the digraphs $\Gamma_s^{\times n}$ and $\Sigma^{\times n}$ satisfy the following:
\begin{enumerate}[\rm(a)]
\item $\Gamma_s^{\times n}$ is $s$-arc-transitive;
\item $\Sigma^{\times n}$ is vertex-primitive;
\item $\Gamma_s^{\times n}$ and $\Sigma^{\times n}$ are non-diagonalizable.
\end{enumerate}
\end{theorem}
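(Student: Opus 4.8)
The plan is to split Theorem~\ref{THEinfinite} into base-case properties of the single digraphs $\Gamma_s$ and $\Sigma$ together with three soft preservation principles under tensor powers. Two structural facts drive everything. First, directly from the definition of the tensor product, $A(\Gamma^{\times n})=A(\Gamma)^{\otimes n}$, the $n$-fold Kronecker power. Second, the componentwise action of $\Aut(\Gamma)^n$ together with the coordinate permutations $S_n$ preserves the arc relation of $\Gamma^{\times n}$, so $\Aut(\Gamma)\wr S_n$ embeds in $\Aut(\Gamma^{\times n})$ through its product action on $V(\Gamma)^n$. Each part of the theorem then reduces to a property of the base digraph.

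For part~(a), a sequence is an $s$-arc of $\Gamma^{\times n}$ exactly when each of its $n$ coordinate sequences is an $s$-arc of $\Gamma$, again by the Kronecker description of adjacency. Hence, given two $s$-arcs of $\Gamma_s^{\times n}$, I would use the $s$-arc-transitivity of $\Gamma_s$ to pick, in each coordinate $i$, an automorphism carrying the $i$-th coordinate $s$-arc of one to that of the other, and assemble these into an element of $\Aut(\Gamma_s)^n$; this proves $\Gamma_s^{\times n}$ is $s$-arc-transitive. For part~(b) I would invoke the standard product-action criterion: for $|\Delta|\ge2$ and $n\ge2$, the group $H\wr S_n$ is primitive on $\Delta^n$ if and only if $H$ is primitive but not regular on $\Delta$. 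Applied with $H=\Aut(\Sigma)$ and $\Delta=V(\Sigma)$, it suffices to know $\Sigma$ is vertex-primitive: because $|V(\Sigma)|=441$ is composite and a regular primitive group must have prime degree, such an $\Aut(\Sigma)$ is automatically non-regular; the criterion then makes the subgroup $\Aut(\Sigma)\wr S_n\le\Aut(\Sigma^{\times n})$ primitive on $V(\Sigma)^n$ for each $n\ge2$ (the case $n=1$ being the base digraph itself), and primitivity of a subgroup forces it on the whole group.

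Part~(c) rests on the preservation lemma that a non-diagonalizable $A$ has non-diagonalizable $A^{\otimes n}$ for all $n\ge1$. Putting $A$ in Jordan form $A\sim J_k(\lambda)\oplus C$ with $k\ge2$, the summand $J_k(\lambda)^{\otimes n}$ appears in $A^{\otimes n}$, so it is enough to show this piece is non-diagonalizable. If $\lambda\ne0$, take a length-two chain $J_k(\lambda)v_1=\lambda v_1$, $J_k(\lambda)v_2=\lambda v_2+v_1$; then $(J_k(\lambda)^{\otimes n}-\lambda^n I)(v_2\otimes v_1^{\otimes(n-1)})=\lambda^{n-1}v_1^{\otimes n}\ne0$ while $v_1^{\otimes n}$ is a genuine $\lambda^n$-eigenvector, so a nontrivial Jordan chain survives. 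If $\lambda=0$, then $J_k(0)^{\otimes n}$ is a nonzero nilpotent matrix of index $k\ge2$. Either way $A^{\otimes n}$ is non-diagonalizable, and taking $A=A(\Gamma_s)$ and $A=A(\Sigma)$ reduces part~(c) to non-diagonalizability of the two base digraphs.

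What remains are the base cases, where I expect the real work to sit, the tensor-power steps above being formal. For a Cayley digraph $\Cay(G,T)$ the adjacency action is block-diagonalized by the irreducible representations of $G$, and $\Cay(G,T)$ is diagonalizable if and only if $\sum_{t\in T}\rho(t)$ is diagonalizable for every irreducible $\rho$; so I would prove non-diagonalizability of $\Gamma_s$ and $\Sigma$ by exhibiting one representation for which this matrix is a nontrivial Jordan block at a nonzero eigenvalue. For $\Sigma$ this is a finite computation over the representations of $R\cong(C_7\rtimes C_3)^2$, whereas for $\Gamma_s$ it calls for a single family of representations of $R_s\cong C_2^s\rtimes C_{2s}$ that works uniformly in $s$, which is the subtler of the two. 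The hardest single step, however, should be establishing that $\Sigma$ is vertex-primitive: this requires identifying enough of $\Aut(\Sigma)$ to show its point stabilizer is maximal, equivalently that the size-$160$ connection set $S$ of Construction~\ref{CONver-pri} admits no nontrivial $\Aut(\Sigma)$-invariant partition of the $441$ vertices. With vertex-primitivity of $\Sigma$, $s$-arc-transitivity of $\Gamma_s$, and non-diagonalizability of both in place, the three preservation principles combine to yield Theorem~\ref{THEinfinite}.
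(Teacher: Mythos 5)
Your proposal is correct and takes essentially the same route as the paper, which proves Theorem~\ref{THEinfinite} by exactly your decomposition: the base-case results (Theorems~\ref{THEmain3} and~\ref{THEmain4}) combined with the three preservation principles of Lemma~\ref{LEMTensorofdi} --- coordinatewise $s$-arc-transitivity, primitivity of $\Aut(\Sigma)\wr\Sym(n)$ in product action with regularity excluded because $|V(\Sigma)|=441$ is composite, and non-diagonalizability of Kronecker powers, the last reduced as in your argument to Jordan blocks with a factor $J(\alpha,k)$, $k\geq2$. The only cosmetic differences are that you prove the Kronecker step by a direct generalized-eigenvector chain where the paper cites the Horn--Johnson description of the Jordan form of $J(\alpha,s)\otimes J(\beta,t)$, and you prove the $s$-arc statement directly where the paper cites \cite[Lemma~2.7]{GX2018}; both are sound, and your sketch of where the base-case work lies (irreducible representations via Lemma~\ref{LEMrepre2}, and maximality of a point stabilizer for vertex-primitivity) matches what the paper actually does.
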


The remainder of this paper is structured as follows. In the next section, we will give some basic definitions and lemmas that will play an important role in the proofs of our main results. After these preparations, we will prove in Section~\ref{sec3} that the digraph $\Gamma_s$ defined in Construction~$\ref{CONs-arc}$ is a non-diagonalizable $s$-arc-transitive digraph for each integer $s\geq2$ (see Theorem~\ref{THEmain3}), and prove in Section~\ref{sec4} that the digraph $\Sigma$ in Construction~$\ref{CONver-pri}$ is a non-diagonalizable vertex-primitive digraph (see Theorem~\ref{THEmain4}). Finally, Theorem~\ref{THEinfinite} follows from Lemma~\ref{LEMTensorofdi} and Theorems~\ref{THEmain3} and~\ref{THEmain4} immediately, as summarized in Section~\ref{sec5} along with some open questions and a conjecture.

\section{Preliminaries}\label{sec2}

Throughout the paper, $\sqcup$ denotes the disjoint union. For a positive integer $n$, denote the cyclic group of order $n$ by $C_n$ and the dihedral group of order $2n$ by $D_{2n}$.
Let $G$ be a finite group. For elements $a$ and $b$ in $G$ denote $a^b=b^{-1}ab$. For a subgroup $H$ and a subset $D$ of $G\setminus H$ such that $D$ is a union of double cosets of $H$ in $G$, the \emph{coset digraph} $\Cos(G,H,D)$ is the digraph with vertex set $[G{:}H]$, the set of right cosets of $H$ in $G$, and $Hx\to Hy$ if and only if $yx^{-1}\in D$. Clearly, the right multiplication action of $G$ on $[G{:}H]$ induces a group of automorphisms of $\Cos(G,H,D)$, and $\Cos(G,H,D)$ is arc-transitive if $D$ is a single double coset of $H$ in $G$.

For matrices $A$ and $B$, denote by $A\otimes B$ their \emph{Kronecker product (tensor product)}, that is, the matrix obtained by replacing each entry $a_{i,j}$ of $A$ with the block $a_{i,j}B$.
Let $\mathbb{C}$ be the complex field, and denote by $M_{n\times m}(\mathbb{C})$ the set of $n\times m$ matrices with entries in $\mathbb{C}$.
Some basic properties of the Kronecker product are given in the following lemma, which follows from~\cite[Equation~4.2.7,~Equation~4.2.8,~Lemma~4.2.10,~ Corollary~4.2.11 and Corollary~4.3.10]{HJ1994}.
\begin{lemma}\label{LEMtenpro}
Let $A\in M_{m\times n}(\mathbb{C})$, $B\in M_{p\times q}(\mathbb{C})$, $C\in M_{n\times k}(\mathbb{C})$ and $D\in M_{q\times r}(\mathbb{C})$. The following hold:
\begin{enumerate}[\rm(a)]
\item $(A\otimes B)(C\otimes D)=(AC)\otimes (BD)$;
\item if both $A$ and $B$ are invertible, then $A\otimes B$ is invertible and $(A\otimes B)^{-1}=A^{-1}\otimes B^{-1}$;
\item if $m=p$ and $n=q$, then $(A+B)\otimes U=A\otimes U+B\otimes U$ and $U\otimes (A+B)=U\otimes A+U\otimes B$ for all $U\in M_{\ell\times t}(\mathbb{C})$;
\item if $m=n$ and $p=q$, then $A\otimes B$ and $B\otimes A$ are similar.
\end{enumerate}
\end{lemma}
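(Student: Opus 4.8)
The plan is to verify each statement directly from the definition of the Kronecker product, in which the $(i,j)$ block of $A\otimes B$ equals $a_{ij}B$; parts (b)--(d) then either follow formally from (a) or require a single new ingredient. First I would prove the mixed-product rule in (a). Writing the product blockwise, the $(i,\ell)$ block of $(A\otimes B)(C\otimes D)$ is $\sum_{j}(a_{ij}B)(c_{j\ell}D)=\bigl(\sum_{j}a_{ij}c_{j\ell}\bigr)BD=(AC)_{i\ell}\,BD$, which is exactly the $(i,\ell)$ block of $(AC)\otimes(BD)$; the sizes are compatible because $A\otimes B$ is $(mp)\times(nq)$ and $C\otimes D$ is $(nq)\times(kr)$, so the contraction index $j$ runs over the common range $\{1,\dots,n\}$ and the blocks $a_{ij}B$, $c_{j\ell}D$ multiply.

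Next, part (b) is an immediate consequence of (a). Invertibility forces $A$ and $B$ to be square, say $m=n$ and $p=q$, so $A^{-1}$ and $B^{-1}$ exist; applying (a) gives $(A\otimes B)(A^{-1}\otimes B^{-1})=(AA^{-1})\otimes(BB^{-1})=I_m\otimes I_p=I_{mp}$, and symmetrically for the product in the reverse order, whence $A\otimes B$ is invertible with inverse $A^{-1}\otimes B^{-1}$. For part (c) I would again compare blocks: the $(i,j)$ block of $(A+B)\otimes U$ is $(a_{ij}+b_{ij})U=a_{ij}U+b_{ij}U$, which is the sum of the corresponding blocks of $A\otimes U$ and $B\otimes U$, and the second identity in (c) is checked in the same way. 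These three parts are purely routine bookkeeping.

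The only part requiring a new idea is (d). Here I would introduce the perfect-shuffle permutation matrix $P\in M_{mp\times mp}(\mathbb{C})$ realizing the canonical swap of tensor factors $e_i\otimes e_j\mapsto e_j\otimes e_i$ on the standard basis of $\mathbb{C}^m\otimes\mathbb{C}^p$. Since $P$ is a permutation matrix it is orthogonal, so $P^{-1}=P^{\top}$, and the crux is the conjugation identity $P(A\otimes B)P^{-1}=B\otimes A$, valid when $m=n$ and $p=q$ so that both sides are square of order $mp$. I expect this verification to be the main obstacle: it is established by applying both sides to an arbitrary basis vector $e_i\otimes e_j$ and tracking how the shuffle reindexes the block structure, which is the one genuinely index-heavy computation in the lemma. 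Once the identity holds, $A\otimes B$ and $B\otimes A$ are conjugate by $P$ and hence similar, completing the proof. As all four statements are standard properties of the Kronecker product, an alternative to the index computations is simply to invoke the cited results of Horn and Johnson~\cite{HJ1994}.
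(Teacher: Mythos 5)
Your proposal is correct, but it takes a different route from the paper: the paper gives no argument at all for this lemma, deriving all four parts by citation to Horn and Johnson \cite{HJ1994} (Equations~4.2.7 and 4.2.8, Lemma~4.2.10, Corollary~4.2.11 and Corollary~4.3.10), exactly the fallback you mention in your last sentence. What you supply instead is a self-contained verification, and it is sound throughout: the blockwise computation in (a) is the standard proof of the mixed-product rule, (b) follows formally from (a) once you note (as you do) that invertibility forces $m=n$ and $p=q$, and (c) is immediate block bookkeeping. For (d), your perfect-shuffle argument is precisely the content of \cite[Corollary~4.3.10]{HJ1994}, so you are in effect reproving the one cited result that has genuine content. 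One remark: the verification you flag as the ``main obstacle'' is lighter than you anticipate if you work with basis vectors rather than matrix indices. Under the standard identification of $\mathbb{C}^m\otimes\mathbb{C}^p$ with $\mathbb{C}^{mp}$ (lexicographic ordering, which is what makes the Kronecker product represent the tensor product of linear maps), one line suffices: $P(A\otimes B)(e_i\otimes e_j)=P\bigl(Ae_i\otimes Be_j\bigr)=Be_j\otimes Ae_i=(B\otimes A)(e_j\otimes e_i)=(B\otimes A)P(e_i\otimes e_j)$, whence $P(A\otimes B)P^{-1}=B\otimes A$ with no index-tracking through the block structure. The trade-off between the two approaches is the usual one: the paper's citation keeps a preliminary lemma short in a paper whose real content lies elsewhere, while your direct proof makes the lemma independent of \cite{HJ1994} at the cost of a page of routine computation; for part (d) in particular, your argument also records explicitly that the similarity is realized by a permutation (indeed orthogonal) matrix, which is slightly more information than the bare statement requires.
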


The constructions of $\Gamma_s$ and $\Sigma$ in Sections~\ref{sec3} and~\ref{sec4} are via Cayley digraphs. The following two lemmas enable us to prove the non-diagonalizability of Cayley digraphs on a group $G$ by analyzing irreducible representations (over $\mathbb{C}$) of $G$.
For a subset $S\subseteq G$ and two representations $\rho$ and $\varsigma$ of a group $G$, denote $\rho(S)=\sum_{s\in S}\rho(s)$ and $\rho(S)\oplus\varsigma(S)=\begin{pmatrix}\rho(S) & 0\\0 & \varsigma(S)\end{pmatrix}$.

\begin{lemma}\label{LEMrepre1}\cite[Proposition~7.1]{MA2011}
Let $G$ be a finite group, let $S$ be a nonempty subset of $G\setminus\{1\}$, and let $\{\rho_1,\dots,\rho_k\}$ be a complete set of irreducible representations of $G$ over $\mathbb{C}$. Then $A(\Cay(G,S))$ is similar to
 \[
 d_1\rho_1(S)\oplus d_2\rho_2(S)\oplus\cdots\oplus d_k\rho_k(S),
 \]
 where $d_i$ is the dimension of $\rho_i$ and $d_i\rho_i(S):=\underbrace{\rho_i(S)\oplus\cdots\oplus\rho_i(S)}_{d_i}$ for $i\in\{1,\ldots,k\}$.
\end{lemma}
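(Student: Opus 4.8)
The plan is to realize the adjacency matrix $A(\Cay(G,S))$ as the image of the group-algebra element $\mathbf{s}=\sum_{s\in S}s\in\mathbb{C}[G]$ under the (left) regular representation of $G$, and then to block-diagonalize the regular representation into its irreducible constituents. First I would fix the standard basis $\{e_x : x\in G\}$ of the group algebra $\mathbb{C}[G]\cong\mathbb{C}^{G}$ and consider the left regular representation $\lambda\colon G\to\GL(\mathbb{C}[G])$ determined by $\lambda(g)e_x=e_{gx}$, which is a genuine homomorphism. A direct entrywise check shows that the matrix of $\lambda(\mathbf{s})=\sum_{s\in S}\lambda(s)$ in this basis has $(y,x)$-entry equal to $[\,yx^{-1}\in S\,]$, which is exactly the $(x,y)$-entry of $A(\Cay(G,S))$; hence this matrix equals $A(\Cay(G,S))^{\mathsf T}$. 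Since every complex square matrix is similar to its transpose, $A(\Cay(G,S))$ is similar to the matrix of $\lambda(\mathbf{s})$.

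Second, I would invoke the fundamental decomposition of the regular representation: by Maschke's theorem together with the Wedderburn structure of $\mathbb{C}[G]$ (equivalently, by comparing the regular character with $\sum_i d_i\chi_i$), the regular representation $\lambda$ is equivalent to $\bigoplus_{i=1}^{k} d_i\rho_i$, each irreducible $\rho_i$ occurring with multiplicity equal to its dimension $d_i$. Concretely, this yields a single invertible matrix $Q$ with $Q^{-1}\lambda(g)Q=d_1\rho_1(g)\oplus\cdots\oplus d_k\rho_k(g)$ simultaneously for all $g\in G$.

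Finally, I would apply this simultaneous block-diagonalization to $\mathbf{s}$ by linearity. Summing the identity $Q^{-1}\lambda(s)Q=\bigoplus_i d_i\rho_i(s)$ over $s\in S$ and using that each block depends linearly on its argument gives $Q^{-1}\lambda(\mathbf{s})Q=\bigoplus_i d_i\sum_{s\in S}\rho_i(s)=\bigoplus_i d_i\rho_i(S)$, which together with the first step proves the claim. The only genuinely delicate point is the bookkeeping in the first step, namely matching the adjacency convention $yx^{-1}\in S$ to left (rather than right) multiplication and accounting for the harmless transpose, since an error there would replace $S$ by $S^{-1}$; the remainder is the standard passage from the group algebra to its isotypic decomposition, which can simply be cited.
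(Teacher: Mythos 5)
The paper offers no proof of this lemma at all---it is quoted directly from \cite[Proposition~7.1]{MA2011}---and your argument is precisely the standard one underlying that citation: realize $A(\Cay(G,S))$, up to transpose, as $\sum_{s\in S}\lambda(s)$ for the left regular representation $\lambda$, decompose $\lambda\cong\bigoplus_{i=1}^{k}d_i\rho_i$ via a single change of basis, and conclude by linearity. Your bookkeeping is also right where it matters: with the paper's convention $x\to y$ if and only if $yx^{-1}\in S$, the matrix of $\sum_{s\in S}\lambda(s)$ is indeed $A(\Cay(G,S))^{\mathsf{T}}$, and invoking the similarity of a complex matrix with its transpose correctly sidesteps the trap of otherwise ending up with $\rho_i(S^{-1})$ in place of $\rho_i(S)$.
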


\begin{lemma}\label{LEMrepre2}
Let $G$ be a finite group and let $S$ be a nonempty subset of $G\setminus\{1\}$. The digraph $\Cay(G,S)$ is non-diagonalizable if and only if there exists a representation $\rho$ of $G$ over $\mathbb{C}$ such that $\rho(S)$ is non-diagonalizable.
\end{lemma}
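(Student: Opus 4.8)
The plan is to reduce both directions to Lemma~\ref{LEMrepre1}, using two elementary facts about diagonalizability that I would record first: a block-diagonal complex matrix is diagonalizable if and only if each of its diagonal blocks is, and a square matrix $M$ is diagonalizable if and only if a direct sum of finitely many copies of $M$ is. Both follow from the criterion that a complex matrix is diagonalizable exactly when its minimal polynomial is squarefree, together with the fact that the minimal polynomial of a block-diagonal matrix is the least common multiple of the minimal polynomials of its blocks (and such an lcm is squarefree if and only if each factor is squarefree).

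For the forward implication, I would fix a complete set $\{\rho_1,\dots,\rho_k\}$ of irreducible representations of $G$ over $\mathbb{C}$ and invoke Lemma~\ref{LEMrepre1}, which gives that $A(\Cay(G,S))$ is similar to $\bigoplus_i d_i\rho_i(S)$. By the two facts above, this matrix is diagonalizable precisely when every $\rho_i(S)$ is diagonalizable; so if $\Cay(G,S)$ is non-diagonalizable, then some $\rho_i(S)$ is non-diagonalizable, and the irreducible $\rho_i$ serves as the required representation $\rho$.

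For the converse, I would start from an arbitrary representation $\rho$ of $G$ over $\mathbb{C}$ with $\rho(S)$ non-diagonalizable. Since $G$ is finite and the field is $\mathbb{C}$, Maschke's theorem (complete reducibility) allows me to decompose $\rho$, up to equivalence, as a direct sum of irreducible constituents $\rho_{i_1},\dots,\rho_{i_t}$ taken from $\{\rho_1,\dots,\rho_k\}$. Then $\rho(S)$ is similar to $\bigoplus_j\rho_{i_j}(S)$, and the block-diagonal fact forces some $\rho_{i_j}(S)$ to be non-diagonalizable. As $\rho_{i_j}$ occurs (with multiplicity $d_{i_j}\ge1$) among the blocks of the decomposition in Lemma~\ref{LEMrepre1}, I would conclude that $A(\Cay(G,S))$ is non-diagonalizable.

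I do not expect a genuine obstacle in this argument. The one point deserving care is that the representation in the statement is permitted to be arbitrary rather than irreducible; this is exactly why the converse must pass through complete reducibility to extract an irreducible constituent whose image of $S$ is non-diagonalizable, matching the irreducible blocks furnished by Lemma~\ref{LEMrepre1}.
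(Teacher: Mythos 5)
Your proposal is correct and follows essentially the same route as the paper: both directions reduce to Lemma~\ref{LEMrepre1} together with Maschke's theorem, using the fact that a block-diagonal matrix is diagonalizable if and only if each block is. Your write-up merely makes explicit (via squarefree minimal polynomials) the block-diagonal criterion that the paper's proof uses implicitly.
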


\begin{proof}
For each representation $\rho$ of $G$ over $\mathbb{C}$, by Maschke's theorem~(see~\cite[Corollary~1.6]{FH1991}), there exist irreducible representations $\rho_1,\rho_2,\ldots,\rho_t$ of $G$ satisfying $\rho=\rho_1\oplus\rho_2\oplus\cdots\oplus\rho_t$. This implies that $\rho(S)$ is non-diagonalizable if and only if $\rho_i(S)$ is non-diagonalizable for some $i\in \{1,2,\ldots,t\}$. According to Lemma~\ref{LEMrepre1}, the latter holds if and only if $A(\Cay(G,S))$ is non-diagonalizable. Thus the lemma follows.
\end{proof}

Denote by $J(\alpha,s)$ the $s\times s$ Jordan block with eigenvalue $\alpha$.

\begin{lemma}\label{LEMjorb}
If $s>1$ or $t>1$, then $J(\alpha,s)\otimes J(\beta,t)$ is non-diagonalizable.
\end{lemma}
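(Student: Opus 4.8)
The plan is to exploit the elementary fact that a square matrix whose only eigenvalue is $\lambda$ is diagonalizable precisely when it equals the scalar matrix $\lambda I$: diagonalizability would make it similar to $\lambda I$, and any matrix similar to a scalar matrix coincides with that scalar matrix. Thus the problem splits into two parts, namely (i) determining the eigenvalues of $M:=J(\alpha,s)\otimes J(\beta,t)$ and (ii) showing that $M$ is not scalar.

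For (i) I would write each Jordan block as the sum of its scalar and nilpotent parts, $J(\alpha,s)=\alpha I_s+N_s$ and $J(\beta,t)=\beta I_t+N_t$, where $N_s$ and $N_t$ are the nilpotent shift matrices carrying the superdiagonal $1$'s. Since both factors are upper triangular with constant diagonals, their Kronecker product is again upper triangular and every diagonal entry equals $\alpha\beta$; hence $\alpha\beta$ is the unique eigenvalue of $M$, of algebraic multiplicity $st$. Combined with the reduction above, it then remains only to prove $M\neq\alpha\beta I_{st}$.

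For (ii) I would expand using the distributivity of the Kronecker product (Lemma~\ref{LEMtenpro}) to obtain
\[
M-\alpha\beta I_{st}=\alpha\,(I_s\otimes N_t)+\beta\,(N_s\otimes I_t)+N_s\otimes N_t.
\]
The three summands occupy pairwise disjoint off-diagonal positions — the entries shifted by one column within a block, by one block-row, and by one of each, respectively — so the right-hand side vanishes only if every summand does. When both $s>1$ and $t>1$, the cross term $N_s\otimes N_t$ is already nonzero and the conclusion follows at once, independently of $\alpha$ and $\beta$. The one delicate point, which I expect to be the main obstacle, is the boundary case in which exactly one of $s,t$ exceeds $1$: there the cross term disappears and $M$ collapses to a scalar multiple of a single Jordan block of size $\max\{s,t\}$, so one must invoke the nonvanishing of the surviving eigenvalue (the $\beta$, resp.\ $\alpha$, appearing as the coefficient of $N_s\otimes I_t$, resp.\ $I_s\otimes N_t$) to guarantee that the remaining nilpotent term does not cancel. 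Once this bookkeeping is in place, the eigenvalue computation and the scalar-matrix reduction are entirely routine.
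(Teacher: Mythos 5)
Your main line is correct and takes a genuinely different route from the paper. The paper quotes the classification of the Jordan canonical form of $J(\alpha,s)\otimes J(\beta,t)$ from \cite[Theorem~4.3.17]{HJ1994} (reproduced in four cases according to the vanishing of $\alpha$ and $\beta$) and reads off a Jordan block of size exceeding one; you avoid that machinery entirely: triangularity of the Kronecker product gives the single eigenvalue $\alpha\beta$, a matrix with a single eigenvalue is diagonalizable only if it is scalar, and the decomposition $M-\alpha\beta I_{st}=\alpha\,(I_s\otimes N_t)+\beta\,(N_s\otimes I_t)+N_s\otimes N_t$ into summands with pairwise disjoint supports settles everything when $s>1$ and $t>1$, since then $N_s\otimes N_t\neq0$ irrespective of $\alpha$ and $\beta$. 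This is more elementary and self-contained than the citation, at the cost of a little bookkeeping that the citation hides.

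However, the ``delicate point'' you defer is not mere bookkeeping: when, say, $t=1$ and $s>1$, your reduction gives $M=\beta\,J(\alpha,s)$, and the hypothesis ``$s>1$ or $t>1$'' does not exclude $\beta=0$. There is no nonvanishing of the surviving eigenvalue to invoke --- the lemma as literally stated is false in this configuration, since $J(\alpha,s)\otimes J(0,1)$ is the zero $s\times s$ matrix, which is diagonalizable. So your proof cannot be completed as written, but then no proof can; what is missing is an extra hypothesis such as $\min\{s,t\}>1$, or ``$\beta\neq0$ if $t=1$'' (and symmetrically ``$\alpha\neq0$ if $s=1$''). Notably, the paper's own proof has the same blind spot: its final sentence produces a block of size greater than one only via $J(\alpha\beta,s+t-1)$ in the case $\alpha\beta\neq0$, while in the cases with $\alpha\beta=0$ the cited classification only guarantees a block $J(0,\min\{s,t\})$, which has size one precisely in the boundary configurations above. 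The gap is harmless where the lemma is applied: in Lemma~\ref{tsnon-dia} the non-diagonalizable factor supplies a block with $s>1$, and from the other adjacency matrix, provided it is nonzero, one can always choose a Jordan block with either $t>1$ or $\beta\neq0$ --- exactly the regime your disjoint-support argument already covers.
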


\begin{proof}
According to~\cite[Theorem~4.3.17]{HJ1994}, $J(\alpha,s)\otimes J(\beta,t)$ is similar to
\[
\begin{cases}
J(0,\min\{s,t\})^{\oplus(|s-t|+1)}\oplus\bigoplus\limits^{\min\{s,t\}-1}_{k=1}J(0,k)^{\oplus2} &\text{if }\alpha=0=\beta\\
J(0,s)^{\oplus t} &\text{if }\alpha=0\neq\beta\\
J(0,t)^{\oplus s} &\text{if }\alpha\neq0=\beta\\
\bigoplus\limits^{\min\{s,t\}}_{k=1}J(\alpha\beta,s+t+1-2k) &\text{if }\alpha\beta\neq0.
\end{cases}
\]
This shows that the Jordan canonical form of $J(\alpha,s)\otimes J(\beta,t)$ contains one of the Jordan blocks $J(0,s)$, $J(0,t)$ and $J(\alpha\beta,s+t-1)$.
Since $s>1$ or $t>1$, we have $s+t-1>1$. Hence $J(\alpha,s)\otimes J(\beta,t)$ is non-diagonalizable.
\end{proof}

\begin{lemma}\label{tsnon-dia}
If either $\Gamma$ or $\Sigma$ is non-diagonalizable, then $\Gamma\times\Sigma$ is non-diagonalizable.
\end{lemma}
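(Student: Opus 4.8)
The plan is to reduce the non-diagonalizability of $\Gamma\times\Sigma$ to that of a single Kronecker product of Jordan blocks, to which Lemma~\ref{LEMjorb} applies. The first step is to record the fundamental identity $A(\Gamma\times\Sigma)=A(\Gamma)\otimes A(\Sigma)$, which is immediate from the definitions once the vertices of $\Gamma\times\Sigma$ are ordered lexicographically: the entry of $A(\Gamma\times\Sigma)$ in position $((u_1,v_1),(u_2,v_2))$ equals $1$ exactly when $u_1\to u_2$ in $\Gamma$ and $v_1\to v_2$ in $\Sigma$, that is, it equals the product of the corresponding entries of $A(\Gamma)$ and $A(\Sigma)$, which is precisely the block pattern of the Kronecker product. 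Since diagonalizability is invariant under similarity and the roles of $\Gamma$ and $\Sigma$ are interchangeable by Lemma~\ref{LEMtenpro}(d), I would assume without loss of generality that $\Gamma$ is non-diagonalizable.

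Next I would pass to Jordan canonical forms. Write $A(\Gamma)=PJ_\Gamma P^{-1}$ and $A(\Sigma)=QJ_\Sigma Q^{-1}$ with $J_\Gamma,J_\Sigma$ in Jordan form and $P,Q$ invertible. Using parts (a) and (b) of Lemma~\ref{LEMtenpro},
\[
A(\Gamma)\otimes A(\Sigma)=(P\otimes Q)(J_\Gamma\otimes J_\Sigma)(P\otimes Q)^{-1},
\]
so $A(\Gamma\times\Sigma)$ is similar to $J_\Gamma\otimes J_\Sigma$, and it suffices to prove that the latter is non-diagonalizable.

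The main work is to extract a non-diagonalizable summand from $J_\Gamma\otimes J_\Sigma$. Writing $J_\Gamma=\bigoplus_i J(\alpha_i,s_i)$ and $J_\Sigma=\bigoplus_j J(\beta_j,t_j)$ as direct sums of Jordan blocks, I would argue that $J_\Gamma\otimes J_\Sigma$ is similar to $\bigoplus_{i,j}\bigl(J(\alpha_i,s_i)\otimes J(\beta_j,t_j)\bigr)$. Because $J_\Gamma$ is block diagonal, $J_\Gamma\otimes J_\Sigma$ is already block diagonal with blocks $J(\alpha_i,s_i)\otimes J_\Sigma$; applying Lemma~\ref{LEMtenpro}(d) to swap the two factors and then using the block-diagonal form of $J_\Sigma$ splits each such block further, up to a simultaneous permutation of rows and columns. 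This bookkeeping with the ordering of the Kronecker factors and the accompanying permutation similarity is the step I expect to be the most delicate. Since $\Gamma$ is non-diagonalizable, some $s_i>1$, so for any $j$ the block $J(\alpha_i,s_i)\otimes J(\beta_j,t_j)$ is non-diagonalizable by Lemma~\ref{LEMjorb}. Finally, a direct sum of matrices is diagonalizable precisely when every summand is, so the presence of this one non-diagonalizable block forces $J_\Gamma\otimes J_\Sigma$, and hence $A(\Gamma\times\Sigma)$, to be non-diagonalizable.
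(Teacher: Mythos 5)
Your proposal is correct and follows essentially the same route as the paper: both identify $A(\Gamma\times\Sigma)=A(\Gamma)\otimes A(\Sigma)$, reduce via the similarity facts of Lemma~\ref{LEMtenpro} to a single Kronecker product $J(\alpha,s)\otimes J(\beta,t)$ of Jordan blocks with $s>1$ or $t>1$, and conclude by Lemma~\ref{LEMjorb}. You merely spell out more explicitly the block-diagonal bookkeeping that the paper compresses into the sentence ``each Jordan block of $J(\alpha,s)\otimes J(\beta,t)$ is a Jordan block of $A\otimes B$.''
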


\begin{proof}
Let $A$ and $B$ be the adjacency matrices of $\Gamma$ and $\Sigma$, respectively.
Then $A\otimes B$ is the adjacency matrix of $\Gamma\times\Sigma$.
Suppose that either $\Gamma$ or $\Sigma$ is non-diagonalizable, that is, either $A$ or $B$ is non-diagonalizable.
This implies that there exist Jordan blocks $J(\alpha,s)$ of $A$ and $J(\beta,t)$ of $B$ with $s>1$ or $t>1$.
By Lemma~\ref{LEMtenpro}, each Jordan block of $J(\alpha,s)\otimes J(\beta,t)$ is a Jordan block of $A\otimes B$.
Thus we conclude from Lemma~\ref{LEMjorb} that $A\otimes B$ is non-diagonalizable, which means that $\Gamma\otimes\Sigma$ is non-diagonalizable.
\end{proof}

For a digraph $\Gamma$, recall the digraph $\Gamma^{\times n}$ defined in the paragraph before Construction~\ref{CONs-arc}.
We give some properties for $\Gamma^{\times n}$ in the following lemma.

\begin{lemma}\label{LEMTensorofdi}
For positive integers $n$ and $s$, the digraph $\Gamma^{\times n}$ satisfies the following:
\begin{enumerate}[\rm (a)]
\item $\Gamma^{\times n}$ is $s$-arc-transitive if $\Gamma$ is $s$-arc-transitive;
\item $\Gamma^{\times n}$ is vertex-primitive if $\Gamma$ is vertex-primitive and $|V(\Gamma)|$ is not prime;
\item $\Gamma^{\times n}$ is non-diagonalizable if $\Gamma$ is non-diagonalizable.
\end{enumerate}
\end{lemma}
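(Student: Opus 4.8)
The plan is to exploit the coordinate structure of the vertex set $V(\Gamma^{\times n})=V(\Gamma)^n$, on which adjacency is coordinatewise: $(u_1,\dots,u_n)\to(w_1,\dots,w_n)$ precisely when $u_j\to w_j$ in $\Gamma$ for every $j$. The one observation underlying both (a) and (b) is that the wreath product $\Aut(\Gamma)\wr S_n$ embeds into $\Aut(\Gamma^{\times n})$ through the product action: the base group $\Aut(\Gamma)^n$ acts by one automorphism of $\Gamma$ per coordinate, while $S_n$ permutes the $n$ coordinates. Both plainly preserve the coordinatewise adjacency relation (the latter because all $n$ tensor factors equal $\Gamma$), so they are automorphisms of $\Gamma^{\times n}$.

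For (a) I would first note that a sequence $(x_0,\dots,x_s)$ of vertices of $\Gamma^{\times n}$ is an $s$-arc if and only if its projection to each coordinate $j$ is an $s$-arc of $\Gamma$; thus an $s$-arc of $\Gamma^{\times n}$ is exactly an $n$-tuple of $s$-arcs of $\Gamma$. Given two $s$-arcs of $\Gamma^{\times n}$, I apply $s$-arc-transitivity of $\Gamma$ separately in each coordinate to obtain $g_1,\dots,g_n\in\Aut(\Gamma)$ carrying the source arc to the target arc coordinate by coordinate; the element $(g_1,\dots,g_n)$ of $\Aut(\Gamma)^n\le\Aut(\Gamma^{\times n})$ then maps one $s$-arc to the other, proving $s$-arc-transitivity. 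For (c) I would argue by induction on $n$, with base case $n=1$ being the hypothesis: writing $\Gamma^{\times n}=\Gamma^{\times(n-1)}\times\Gamma$ and using that the factor $\Gamma$ is non-diagonalizable, Lemma~\ref{tsnon-dia} gives at once that the product is non-diagonalizable.

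The substantive part is (b). Here the plan is to invoke the standard primitivity criterion for the product action: for $n\ge2$ and $|\Delta|\ge2$, the group $H\wr S_n$ in its product action on $\Delta^n$ is primitive if and only if $H$ acts primitively but \emph{not regularly} on $\Delta$. Taking $H=\Aut(\Gamma)$ and $\Delta=V(\Gamma)$, primitivity of $H$ is the hypothesis, and the assumption that $|V(\Gamma)|$ is not prime is exactly what supplies non-regularity: for a regular permutation group the block systems correspond to the subgroups of the group, so a primitive regular group has no proper nontrivial subgroup and hence is of prime order, forcing its degree to be prime. Thus $\Aut(\Gamma)$ is primitive and non-regular, so $\Aut(\Gamma)\wr S_n$ is primitive on $V(\Gamma)^n$; since $\Aut(\Gamma)\wr S_n\le\Aut(\Gamma^{\times n})$ and every $\Aut(\Gamma^{\times n})$-invariant partition of $V(\Gamma)^n$ is also invariant under the subgroup, $\Gamma^{\times n}$ is vertex-primitive. (The case $n=1$ is the hypothesis, and the degenerate case $|V(\Gamma)|=1$ is trivial.)

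The main obstacle I anticipate is part (b): correctly marshalling the product-action primitivity criterion and, in particular, deducing its non-regularity hypothesis from the non-prime condition on $|V(\Gamma)|$. Once the embedding of $\Aut(\Gamma)\wr S_n$ is in place, parts (a) and (c) are routine bookkeeping on the coordinatewise adjacency structure and an appeal to Lemma~\ref{tsnon-dia}, respectively.
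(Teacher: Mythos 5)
Your proposal is correct and follows essentially the same route as the paper: the paper proves (a) by citing \cite[Lemma~2.7]{GX2018}, (c) by a single appeal to Lemma~\ref{tsnon-dia}, and (b) exactly as you do, via the embedding $\Aut(\Gamma)\wr\Sym(n)\leq\Aut(\Gamma^{\times n})$ in the product action together with the primitivity criterion for wreath products (citing \cite{Cameron1981}, where the non-prime hypothesis on $|V(\Gamma)|$ likewise serves to rule out regularity of $\Aut(\Gamma)$). The only difference is that you inline self-contained arguments for the steps the paper delegates to references, namely the coordinatewise description of $s$-arcs in $\Gamma^{\times n}$ and the fact that a primitive regular group has prime degree, both of which are carried out correctly.
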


\begin{proof}
Parts~(a) and (c) are obtained directly from \cite[Lemma~2.7]{GX2018} and Lemma~\ref{tsnon-dia}, respectively.
For part~(b), the conditions that $\Aut(\Gamma)$ is primitive on $V(\Gamma)$ and that $|V(\Gamma)|$ is not prime imply that $\Aut(\Gamma)\wr\Sym(n)$ is primitive on $V(\Gamma)^n$ (see \cite[Proposition~3.2~and~the~paragraph~thereafter]{Cameron1981}), and so $\Aut(\Gamma^{\times n})\geq\Aut(\Gamma)\wr\Sym(n)$ is primitive on $V(\Gamma)^n=V(\Gamma^{\times n})$.
\end{proof}

For each prime power $q$, we label the $1$-dimensional subspaces of $\mathbb{F}_q^2$ by the ratio of the coordinates, that is $\langle(x,1)\rangle$ is labelled by $x$ and $\langle(1,0)\rangle$ is labelled by $\infty$. The set of $1$-spaces is then identified with the set $\mathbb{F}_q\cup\{\infty\}$, called the \emph{projective line} over $\mathbb{F}_q$, and denoted by $\PG(1,q)$.
For each matrix $A=\left(\begin{matrix} a & b\\ c & d\end{matrix}\right)\in\GL(2,q)$, the transformation
\[
\phi_A\colon\PG(1,q)\rightarrow\PG(1,q),\ x\mapsto\frac{ax+c}{bx+d}
\]
is called a \emph{linear fractional transformation} on $\PG(1,q)$.
Here we set $\phi_A(\infty)=a/b$ and $\phi_A\left(-d/b\right)=\infty$ if $b\neq0$, and set $\phi_A(\infty)=\infty$ if $b=0$. Note that
\[
\phi\colon\GL(2,q)\rightarrow \Sym(\PG(1,q)),\ A\mapsto\phi_A
\]
is a group homomorphism, and we have $\PGL(2,q)=\phi(\GL(2,q))$ and $\PSL(2,q)=\phi(\SL(2,q))$.
Moreover, $\phi_A\in\PSL(2,q)$ if and only if $\det(A)$ is a square in $\bbF_q$.

\section{The non-diagonalizable $s$-arc-transitive digraphs $\Gamma_s$}\label{sec3}

Fix an integer $s\geq2$. For simplicity of notation, let $a=a_s=(2s-1,2s)(4s-1,4s)\in\Sym(4s)$, let $b=b_s=(1,3,5,\ldots,4s-1,2,4,6,\ldots,4s)\in\Sym(4s)$, and let $R=R_s=\langle a,b\rangle$. Then $\Gamma_s=\Cay(R,\{ab,b\})$ is as defined in Construction~\ref{CONs-arc}.

Throughout this section, let $N=\langle a,a^{b},a^{b^2},\ldots,a^{b^{s-1}}\rangle$, let $G=\langle h,g\rangle$ with
\[
h=(1,2)\in\Sym(4s)\ \text{ and }\ g=(1,3,5,\ldots,4s-1)(2,4,6,\ldots,4s)\in\Sym(4s),
\]
and let $H=\langle h,h^{g},h^{g^2},\ldots,h^{g^{s-1}}\rangle$. Observe that
\begin{align}\label{Eq5}
H&=\langle h\rangle\times\langle h^g\rangle\times\langle h^{g^2}\rangle\times\cdots\times\langle h^{g^{s-1}}\rangle\nonumber\\
&=\langle (1,2)\rangle\times\langle (3,4)\rangle\times\langle (5,6)\rangle\times\cdots\times\langle (2s-1,2s)\rangle\cong C_2^s,
\end{align}
$a=h^{g^{s-1}}h^{g^{-1}}$, and $b=gh$. In particular, $R\leq G$.

\begin{lemma}\label{LEMcayR}
The subgroup $N=\langle a\rangle\times\langle a^{b}\rangle\times\langle a^{b^2}\rangle\times\cdots\times\langle a^{b^{s-1}}\rangle\cong C_2^s$ is normal in $R$ with $R/N=\langle bN\rangle\cong C_{2s}$. In particular, $|R|=2^{s+1}s$.
\end{lemma}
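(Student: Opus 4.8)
The plan is to establish the three assertions in turn: that the generators $a,a^b,\dots,a^{b^{s-1}}$ of $N$ generate an internal direct product isomorphic to $C_2^s$; that $N$ is normalized by $b$, and hence by $R=\langle a,b\rangle$; and that the quotient $R/N$ is cyclic of order exactly $2s$, generated by $bN$. The order formula $|R|=2^{s+1}s$ then follows at once from $|R|=|N|\cdot|R/N|$. Everything reduces to an explicit computation of the conjugates $a^{b^i}$ inside $\Sym(4s)$.

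First I would compute $a^{b^i}$ for $0\le i\le s-1$. Using the cycle description of $b$ (or equivalently the identities $a=h^{g^{s-1}}h^{g^{-1}}$ and $b=gh$ together with $h^{g^j}=(2j+1,2j+2)$), one finds
\[
a^{b^i}=(2i-1,2i)(2s+2i-1,2s+2i),
\]
with entries read modulo $4s$ in $\{1,\dots,4s\}$; in particular $a^{b^0}=a$ and $a^{b^s}=a$. The key observation is that, as $i$ ranges over $0,\dots,s-1$, the supports $\{2i-1,2i,2s+2i-1,2s+2i\}$ are pairwise disjoint and together partition $\{1,\dots,4s\}$. Hence these $s$ involutions commute pairwise and are independent, as no nontrivial product among them can be trivial when their supports do not overlap. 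Therefore $N=\langle a\rangle\times\langle a^b\rangle\times\dots\times\langle a^{b^{s-1}}\rangle\cong C_2^s$ and $|N|=2^s$. Normality of $N$ in $R$ is now easy: conjugation by $b$ sends $a^{b^i}$ to $a^{b^{i+1}}$, and since $a^{b^s}=a$ this merely permutes the generating set cyclically, so $N^b=N$; as $a\in N$, we conclude $N\trianglelefteq R$.

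It remains to determine $R/N$. Since $a\in N$, the quotient is generated by $bN$ alone, so $R/N=\langle bN\rangle$ is cyclic and its order is the least positive $k$ with $b^k\in N$. Multiplying all the generators together, which is legitimate as they have disjoint supports, gives
\[
\prod_{i=0}^{s-1}a^{b^i}=(1,2)(3,4)\cdots(4s-1,4s),
\]
and a direct check shows this equals $b^{2s}$; hence $b^{2s}\in N$ and the order of $bN$ divides $2s$. For the reverse direction I would use the invariant that every element of $N$ preserves the partition of $\{1,\dots,4s\}$ into the pairs $\{2j-1,2j\}$ with $1\le j\le 2s$, since each generator is a product of transpositions internal to these pairs. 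As $b$ is a single $4s$-cycle, $b^k$ preserves this pairing precisely when $2s\mid k$; consequently $b^k\in N$ forces $2s\mid k$, so the order of $bN$ is exactly $2s$. Therefore $R/N\cong C_{2s}$ and $|R|=|N|\cdot|R/N|=2^s\cdot 2s=2^{s+1}s$.

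The main obstacle is pinning down the exact order of $bN$. The containment $b^{2s}\in N$ is a routine identity once the conjugates are computed, but ruling out every smaller power requires the structural observation that $N$ consists entirely of permutations respecting the pairing $\{2j-1,2j\}$, against which the $4s$-cycle $b$ can only land in $N$ when its exponent is a multiple of $2s$.
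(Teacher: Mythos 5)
Your proof is correct and takes essentially the same route as the paper's: compute $a^{b^\ell}=(2\ell-1,2\ell)(2s+2\ell-1,2s+2\ell)$, use the disjoint supports to get $N\cong C_2^s$ normalized by $a$ and $b$, and use $b^{2s}=(1,2)(3,4)\cdots(4s-1,4s)=aa^{b}a^{b^2}\cdots a^{b^{s-1}}\in N$ to conclude $R/N=\langle bN\rangle\cong C_{2s}$. The only differences are that you make explicit the lower bound on the order of $bN$, which the paper leaves implicit (a quicker version: $N$ has exponent $2$ while $b^k$ has order $4s/\gcd(4s,k)>2$ for $0<k<2s$), and there you should phrase the invariant as ``$b^k$ fixes each pair $\{2j-1,2j\}$ setwise'' rather than ``preserves the partition,'' since every power of $b$ permutes these pairs among themselves and only the blockwise-fixing version rules out $0<k<2s$.
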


\begin{proof}
Note that $a$ has order $|a|=2$ and $a^{b^\ell}=(2\ell-1,2\ell)(2s+2\ell-1,2s+2\ell)$ for each $\ell\in \{1,2,\ldots,s\}$.
We see that
\[
N=\langle a\rangle\times\langle a^{b}\rangle\times\langle a^{b^2}\rangle\times\cdots\times\langle a^{b^{s-1}}\rangle\cong C_2^s
\]
is normalized by $a$ and $b$, and so $N\trianglelefteq\langle a,b\rangle=R$. This together with $a\in N$ and
\[
b^{2s}=(1,2)(3,4)\cdots(4s-1,4s)=aa^{b}a^{b^2}\cdots a^{b^{s-1}}\in N
\]
leads to $R/N=\langle bN\rangle\cong C_{2s}$. As a consequence, $|R|=2^s\cdot2s$.
\end{proof}

In order to prove that $\Gamma_s$ is $s$-arc-transitive, we need the following lemma.

\begin{lemma}\label{LEMCosdi}
For the digraph $\Gamma_s$ in Construction~$\ref{CONs-arc}$, we have $\Gamma_s\cong\Cos(G,H,HgH)$.
\end{lemma}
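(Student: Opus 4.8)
The plan is to realise the ambient group $G=\langle h,g\rangle$ concretely as the wreath product $C_2\wr C_{2s}$ and to exhibit $R$ as a regular subgroup on the coset space $[G{:}H]$, so that the desired isomorphism collapses to a single double-coset computation. First I would record the permutation structure of $G$. The $2s$ blocks $P_i=\{2i-1,2i\}$ ($1\le i\le 2s$) form a system of imprimitivity: $g$ permutes them as a single $2s$-cycle while $h=(1,2)$ fixes each block setwise. Writing $e_i=(2i-1,2i)$ for the transposition swapping the $i$-th block, the conjugates $h^{g^\ell}=e_{\ell+1}$ range over all of $e_1,\dots,e_{2s}$, so the base group $B=\langle e_1,\dots,e_{2s}\rangle\cong C_2^{2s}$ lies in $G$ and $G=B\rtimes\langle g\rangle\cong C_2\wr C_{2s}$, of order $2^{2s+1}s$. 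Under this description $H=\langle e_1,\dots,e_s\rangle\le B$ by~\eqref{Eq5}, and the homomorphism $\pi\colon G\to C_{2s}$ recording the action on the blocks has kernel $B$ and sends both $g$ and $b=gh$ to the same generator; since $\ker(\pi|_R)=N$ by Lemma~\ref{LEMcayR}, this gives $R\cap B=N$.

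Next I would set up the regular action. Because $H\le B$ and $R\cap B=N$, one checks $H\cap R=H\cap N=1$ (an element of $N$ supported only on the blocks $P_1,\dots,P_s$ is trivial), and then $|HR|=|H|\,|R|=2^s\cdot 2^{s+1}s=|G|$ forces $G=HR$. Hence every right coset of $H$ contains a unique element of $R$, so $r\mapsto Hr$ is a bijection $R\to[G{:}H]$ turning right multiplication by $R$ into the regular action. Under this bijection an arc $Hx\to Hy$ of $\Cos(G,H,HgH)$ with $x,y\in R$ corresponds to the condition $yx^{-1}\in HgH$; as $yx^{-1}\in R$, this is precisely $yx^{-1}\in HgH\cap R$. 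Thus the bijection is an isomorphism $\Gamma_s\cong\Cos(G,H,HgH)$ provided
\[
HgH\cap R=\{ab,b\}.
\]

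The crux is this last identity, and it is where the real computation lives. The inclusion $\supseteq$ is direct: $b=gh=1\cdot g\cdot h\in HgH$, while writing $b$ in the normal form $\beta g$ ($\beta\in B$) gives $b=e_{2s}g$, whence $ab=e_sg=e_s\cdot g\cdot 1\in HgH$. For the reverse inclusion I would argue via $\pi$: every element of $HgH$ projects to $\pi(g)$, and the elements of $R$ with this projection form the single coset $bN$ of size $2^s$, so $HgH\cap R\subseteq bN$. Writing each element of $HgH$ in normal form one finds that $HgH$ consists of those $\beta g$ whose base component $\beta$ is supported on the blocks indexed by $\{1,\dots,s,2s\}$, a set of the expected size $|HgH|=|H|^2/|H\cap H^g|=2^{s+1}$. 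Expanding $bn$ for $n=\prod_{\ell\in T}a^{b^\ell}\in N$ ($T\subseteq\{1,\dots,s\}$) into this normal form, the support condition becomes a system of $\mathbb{F}_2$-linear constraints on the indicator of $T$, whose only solutions are $T=\varnothing$ and $T=\{1\}$; these yield exactly $b$ and $ba^b=ab$.

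The main obstacle is organising this support bookkeeping cleanly: one must track precisely which blocks the base component of $bn$ occupies and verify that the contributions at the forbidden blocks $P_{s+1},\dots,P_{2s-1}$ cannot cancel, so that the count collapses from the a priori $2^s$ candidates in $bN$ down to the two elements $ab$ and $b$. Once the normal-form computation is arranged so that these constraints are manifestly independent, the identity $HgH\cap R=\{ab,b\}$—and with it the isomorphism—follows immediately.
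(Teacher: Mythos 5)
Your proposal is correct, and its global skeleton coincides with the paper's: show $R\cap H=1$, deduce from $|G|=|H||R|$ that $R$ is a right transversal of $H$ in $G$, and then match the connection set $\{ab,b\}$ with the double coset $HgH$. Where you genuinely diverge is in how the two key sub-steps are executed. For $R\cap H=1$, the paper locates all involutions of $R$ inside $N\cup b^sN$ and checks by explicit cycle computations that $(N\cup b^sN)\cap H=1$; your route via the block projection $\pi\colon G\cong C_2\wr C_{2s}\to C_{2s}$ is cleaner: since $H\le B=\ker\pi$ and $R\cap B=N$ (which does follow from Lemma~\ref{LEMcayR}, as $\pi(b)$ generates $C_{2s}$ and $N\le R\cap B$ with $|R\cap B|=|R|/2s=2^s=|N|$), one gets $R\cap H=H\cap N=1$ immediately, with no involution analysis. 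For the double coset, the paper computes $|H\cap H^g|=2^{s-1}$ to get $HgH=Hg\sqcup Hgh=Hab\sqcup Hb$ and then invokes the transversal property, whereas you compute $HgH$ outright in wreath-product normal form as $\langle e_1,\dots,e_s,e_{2s}\rangle g$ (indeed $HgH=H\,H^{g^{-1}}g$ and $H^{g^{-1}}=\langle e_{2s},e_1,\dots,e_{s-1}\rangle$, so the two descriptions agree, both of size $2^{s+1}$) and intersect with $bN$ directly; writing $bn=e_{2s}\prod_{\ell\in T}e_{\ell-1}e_{s+\ell-1}\,g$, the support condition kills every $\ell\ge2$ in $T$ because distinct $\ell$ hit distinct forbidden blocks $e_{s+\ell-1}$, leaving exactly $T=\varnothing$ and $T=\{1\}$, i.e.\ $b$ and $ba^b=ab$ — this checks out. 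The paper's version is shorter because it only needs two membership verifications ($ab=h^{g^{s-1}}g\in Hg$ and $b=gh\in Hgh$) plus the index count, while your version buys a self-contained, structural description of $G$, $H$, and $HgH$ inside $C_2\wr C_{2s}$ that makes the equality $HgH\cap R=\{ab,b\}$ visible by pure $\mathbb{F}_2$-bookkeeping; the extra cost is the normal-form computation, which you sketch at plan level but which is routine and contains no gap.
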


\begin{proof}
Since $ghg^{-1}=(4s-1,4s)\notin H$, we have $H\neq Hghg^{-1}$ and so $Hg\neq Hgh$. Hence $HgH\supseteq Hg\sqcup Hgh$. Moreover, since
\begin{align*}
H\cap H^g&=(\langle h\rangle\times\langle h^g\rangle\times\cdots\times\langle h^{g^{s-1}}\rangle)\cap
(\langle h^g\rangle\times\langle h^{g^2}\rangle\times\cdots\times\langle h^{g^s}\rangle)\\
&=\langle h^g\rangle\times\langle h^{g^2}\rangle\times\cdots\times\langle h^{g^{s-1}}\rangle\\
&\cong C_2^{s-1},
\end{align*}
we see that $|HgH|/|H|=|H|/|H\cap H^{g}|=2$. Therefore, $HgH=Hg\sqcup Hgh$. As $a=h^{g^{s-1}}h^{g^{-1}}$ and $b=gh$, we have $ab=h^{g^{s-1}}g\in Hg$, and so
\begin{equation}\label{Eq4}
HgH=Hg\sqcup Hgh=Hab\sqcup Hb.
\end{equation}

Now we prove that $|G|=2^{2s}\cdot2s$. Let $M=\langle h,h^{g},h^{g^2},\ldots,h^{g^{2s-1}}\rangle$.
Since $h^{g^i}=(2i+1,2i+2)$ for $i\in \{0,1,\ldots,2s-1\}$, we see that
\[
M=\langle h\rangle\times\langle h^g\rangle\times\langle h^{g^2}\rangle\times\cdots\times\langle h^{g^{2s-1}}\rangle\cong C_2^{2s}.
\]
Therefore, $M$ is normalized by both $h$ and $g$ as $g^{2s}=1$, and so $M\trianglelefteq\langle h,g\rangle=G$.
Together with the facts that $g$ has order $2s$, that elements in $M$ have order dividing $2$ and that
\[
g^s=(1,2s+1)(2,2s+2)\cdots(2s-1,4s-1)(2s,4s)\notin M,
\]
this gives $G/M=\langle gM\rangle\cong C_{2s}$, and so $|G|=2^{2s}\cdot2s$.

Let $\psi\colon r\mapsto Hr$ be the mapping from the vertex set $R$ of $\Gamma_s$ to $[G{:}H]$. Next we prove that $\psi$ is a graph isomorphism from $\Gamma_s$ to $\Cos(G,H,HgH)$.
Note from
\[
N=\langle (1,2)(2s+1,2s+2)\rangle\times\langle (3,4)(2s+3,2s+4)\rangle\times\cdots\times\langle (2s-1,2s)(4s-1,4s)\rangle
\]
and $H=\langle (1,2)\rangle\times\langle (3,4)\rangle\times\cdots\times\langle (2s-1,2s)\rangle$ that $N\cap H=1$.
Since $R\cap H\leq H$ is an elementary abelian $2$-group, the only possible non-identity elements of $R\cap H$ are involutions.
Moreover, we deduce from $R/N=\langle bN\rangle\cong C_{2s}$ that the involutions of $R$ are contained in $N\cup b^sN$.
Thus $R\cap H\subseteq (N\cup b^sN)\cap H$. Note from
\[
b^s=(1,2s+1,2,2s+2)(3,2s+3,4,2s+4)\cdots(2s-1,4s-1,2s,4s)
\]
that $b^sN\cap H=1$. Hence $R\cap H\subseteq (N\cup b^sN)\cap H=1$ as $N\cap H=1$. From Lemma~\ref{LEMcayR} we have $|R|=2^{s+1}s$.
Since $R\leq G$ and
\[
|G|=2^{2s}\cdot2s=2^{s+1}s\cdot2^s=|R||H|,
\]
we conclude that $R$ forms a right transversal of $H$ in $G$, and so the mapping $\psi$ is bijective.
Hence for $r_1$ and $r_2$ in $R$, we have $r_2r_1^{-1}\in\{ab,b\}$ if and only if $Hr_2r_1^{-1}\subseteq Hab\sqcup Hb$.
By~\eqref{Eq4}, the latter condition holds if and only if $Hr_2r_1^{-1}\subseteq HgH$, or equivalently, $r_2r_1^{-1}\in HgH$.
Thus we conclude that $r_1\rightarrow r_2$ is an arc in $\Gamma_s$ if and only if $Hr_1\rightarrow Hr_2$ is an arc of $\Cos(G,H,HgH)$.
This shows that $\psi$ is an isomorphism from $\Gamma_s$ to $\Cos(G,H,HgH)$.
\end{proof}

Now we give the main result of this section.

\begin{theorem}\label{THEmain3}
For the digraph $\Gamma_s$ in Construction~$\ref{CONs-arc}$, the following hold:
\begin{enumerate}[\rm(a)]
\item $|V(\Gamma_s)|=2^{s+1}s$;
\item $\Val(\Gamma_s)=2$;
\item $\Gamma_s$ is strongly connected;
\item $\Gamma_s$ is $s$-arc-transitive;
\item $\Gamma_s$ is non-diagonalizable.
\end{enumerate}
\end{theorem}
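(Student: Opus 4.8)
The plan is to establish the five parts of Theorem~\ref{THEmain3} in the order listed, leveraging the structural results already proven. Part~(a) is immediate from Lemma~\ref{LEMcayR}, which gives $|R|=2^{s+1}s$, and since $V(\Gamma_s)=R$ we are done. Part~(b) follows directly from the definition $\Gamma_s=\Cay(R,\{ab,b\})$: each vertex $x$ has out-neighbours $abx$ and $bx$, and these are distinct because $ab\neq b$ (equivalently $a\neq1$); symmetrically each vertex has exactly two in-neighbours, so $\Val(\Gamma_s)=2$.

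For part~(c), strong connectivity of a Cayley digraph $\Cay(R,\{ab,b\})$ is equivalent to $\{ab,b\}$ generating $R$ as a semigroup, and since $R$ is finite this is the same as $\langle ab,b\rangle=R$. I would verify this by noting $(ab)b^{-1}=a$ up to the appropriate side convention, so that both $a$ and $b$ lie in the subsemigroup generated by $\{ab,b\}$; more carefully, since $R$ is finite, the subgroup generated by the connection set coincides with the subsemigroup it generates, and $\langle ab,b\rangle=\langle a,b\rangle=R$ by Construction~\ref{CONs-arc}. Hence $\Gamma_s$ is strongly connected.

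For part~(d), I would use the coset-digraph description $\Gamma_s\cong\Cos(G,H,HgH)$ from Lemma~\ref{LEMCosdi}. Since $HgH$ is a single double coset of $H$ in $G$, the remark in Section~\ref{sec2} already gives that $\Gamma_s$ is arc-transitive. To upgrade arc-transitivity to $s$-arc-transitivity, the standard criterion is that the vertex stabiliser $H$ should act transitively on the set of $s$-arcs based at the fixed vertex, which translates into a transitivity condition on iterated intersections of conjugates of $H$ along the out-neighbour directions. Concretely, I expect to track the chain of subgroups $H\cap H^g\cap\cdots$ and show that $H$ acts transitively on directed paths of length $s$ emanating from the base vertex; the decomposition $H=\langle(1,2)\rangle\times\langle(3,4)\rangle\times\cdots\times\langle(2s-1,2s)\rangle$ and the explicit form of $H\cap H^g\cong C_2^{s-1}$ computed in the proof of Lemma~\ref{LEMCosdi} are exactly the ingredients needed to count and compare these stabilisers. \emph{This verification of full $s$-arc-transitivity is where I expect the real work to lie}, since one must confirm that the relevant stabiliser remains large enough to be transitive on $s$-arcs but is trivial on $(s{+}1)$-arcs (consistent with the digraph being $s$- but not $(s{+}1)$-arc-transitive), matching the intended value of $s$ from the construction.

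For part~(e), I would invoke the representation-theoretic machinery of Lemmas~\ref{LEMrepre1} and~\ref{LEMrepre2}: it suffices to exhibit a single representation $\rho$ of $R$ over $\mathbb{C}$ such that $\rho(ab)+\rho(b)=\rho(\{ab,b\})$ is non-diagonalizable. Using the normal subgroup $N\cong C_2^s$ with $R/N\cong C_{2s}$ from Lemma~\ref{LEMcayR}, I would build an irreducible representation of $R$ induced from a suitable linear character of $N$ (or of an intermediate abelian subgroup), chosen so that $b$ acts as a cyclic shift while $a$ acts diagonally with mixed signs. The matrix $\rho(ab)+\rho(b)$ should then turn out to be a single Jordan-type block with a repeated eigenvalue but deficient eigenspace—most cleanly, similar to a matrix with a nontrivial $2\times2$ Jordan block. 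The key step is selecting the character and the associated induced module so that the sum $\rho(ab)+\rho(b)$ has a repeated eigenvalue whose geometric multiplicity falls short of its algebraic multiplicity; once such a $\rho$ is pinned down, non-diagonalizability of $\rho(S)$, and hence of $\Gamma_s$ by Lemma~\ref{LEMrepre2}, follows from a direct computation of the Jordan form of a small explicit matrix.
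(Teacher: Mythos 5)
Your route coincides with the paper's at every step: parts (a)--(c) are handled exactly as in the paper (which likewise deduces strong connectivity from $\langle ab,b\rangle=\langle a,b\rangle=R$ together with \cite[Lemma~2.6.1]{GR2001}), part (d) goes through Lemma~\ref{LEMCosdi}, and part (e) through Lemma~\ref{LEMrepre2}. For (d) the criterion you gesture at is exactly \cite[Lemma~2.2]{GX2018}: since $G\leq\Aut\big(\Cos(G,H,HgH)\big)$ and the digraph is arc-transitive, it is $s$-arc-transitive provided
\[
H^g\cap H^{g^2}\cap\cdots\cap H^{g^i}=(H\cap H^g\cap\cdots\cap H^{g^i})(H^g\cap H^{g^2}\cap\cdots\cap H^{g^{i+1}})
\]
for each $i\in\{0,1,\ldots,s-1\}$; with the decomposition \eqref{Eq5} each of the three intersections is visibly a direct product of consecutive factors $\langle h^{g^j}\rangle$, so once the criterion is stated the verification is a one-line comparison of factors, just as in the paper. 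Your aside about triviality on $(s+1)$-arcs is unnecessary: the theorem claims only $s$-arc-transitivity, and the paper proves nothing about $(s+1)$-arcs. So (d) in your proposal is the right plan with the key lemma left unnamed, but it would go through.

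Part (e) is where the genuine gap lies: ``select a character so that $\rho(ab)+\rho(b)$ is non-diagonalizable'' is precisely the assertion to be proved, and you exhibit no candidate. Your prediction of a monomial representation with $b$ acting as a cyclic shift and $a$ acting diagonally with mixed signs does match what the paper builds, but two concrete points remain unresolved in your sketch. First, the construction cannot be uniform in $s$: for odd $s\geq3$ the paper takes a degree-$s$ representation in which $\rho(x)$ sends $e_i$ to $(-1)^{-\varepsilon_{2-2i}+\sum_k\varepsilon_k}e_{i+m(s-1)/2}$, so that $\rho(ab)+\rho(b)=2E_{1,(s+1)/2}$ (a single off-diagonal entry, hence nilpotent and non-diagonalizable); this shift by $(s-1)/2$ makes no sense for even $s$, where the paper instead uses a degree-$2$ representation giving $\rho(ab)+\rho(b)=\left(\begin{smallmatrix}0&2\\0&0\end{smallmatrix}\right)$. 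Your uniform plan does not anticipate this parity split. Second, checking that these formulas define homomorphisms is a real computation, carried out via the multiplication rule \eqref{Eq3} in $R$; it is the bulk of the paper's proof of (e). One simplification you could have claimed but did not: Lemma~\ref{LEMrepre2} requires only \emph{some} representation with $\rho(S)$ non-diagonalizable, so irreducibility and the induced-module formalism in your plan are dispensable. In summary, your proposal is the paper's proof in outline, complete for (a)--(c), complete modulo citing the right lemma for (d), but for (e) it defers exactly the construction whose existence and verification constitute the theorem's main content.
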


\begin{proof}
Parts~(a) and (b) follow directly from $\Gamma_s=\Cay(R,\{ab,b\})$ and $|R|=2^s\cdot2s$.
Since $\langle ab,b\rangle=\langle a,b\rangle=R$, we see that $\Gamma_s$ is a connected digraph, which implies that $\Gamma_s$ is strongly connected (see~\cite[Lemma~2.6.1]{GR2001}), as part~(c) states. Note that
\[
H\rightarrow Hg\rightarrow\cdots\rightarrow Hg^{s-1}\rightarrow Hg^s
\]
is an $s$-arc of the coset digraph $\Cos(G,H,HgH)$ and the stabilizer in $G$ of this $s$-arc is $H\cap H^g\cap\cdots\cap H^{g^s}$. It is clear from~\eqref{Eq5} that
\begin{align*}
H^g\cap H^{g^2}\cap\cdots\cap H^{g^i}&=\langle h^{g^i}\rangle\times\langle h^{g^{i+1}}\rangle\times\cdots\times\langle h^{g^s}\rangle,\\
H\cap H^g\cap\cdots\cap H^{g^i}&=\langle h^{g^i}\rangle\times\langle h^{g^{i+1}}\rangle\times\cdots\times\langle h^{g^{s-1}}\rangle,\\
H^g\cap H^{g^2}\cap\cdots\cap H^{g^{i+1}}&=\langle h^{g^{i+1}}\rangle\times\langle h^{g^{i+2}}\rangle\times\cdots\times\langle h^{g^s}\rangle,
\end{align*}
and so
\[
H^g\cap H^{g^2}\cap\cdots\cap H^{g^i}=(H\cap H^g\cap\cdots\cap H^{g^i})(H^g\cap H^{g^2}\cap\cdots\cap H^{g^{i+1}})
\]
for each $i\in\{0,1,\ldots,s-1\}$. Recall that $G\leq\Aut\big(\Cos(G,H,HgH)\big)$ and $\Cos(G,H,HgH)$ is arc-transitive.
Thus, by \cite[Lemma~2.2]{GX2018} and Lemma~\ref{LEMCosdi}, we conclude that $\Gamma_s$ is $s$-arc-transitive, as part~(d) asserts.

Now it remains to prove part~(e). Denote $a_k=a^{b^k}$ for $k\in \{1,2,\ldots,s\}$.
According to Lemma~\ref{LEMcayR}, any elements $x$ and $y$ in $R$ can be written as
\begin{equation}\label{Eq1}
x=a_1^{\varepsilon_1}a_2^{\varepsilon_2}\cdots a_s^{\varepsilon_s}b^m
\ \ \text{and}\ \
y=a_1^{\theta_1}a_2^{\theta_2}\cdots a_s^{\theta_s}b^n
\end{equation}
for some $\varepsilon_1,\varepsilon_2,\ldots,\varepsilon_s,\theta_1,\theta_2,\ldots,\theta_s \in \{0,1\}$ and $m,n\in \{1,2,\ldots,2s\}$.
Since $(a_k)^{b^\ell}=(a^{b^k})^{b^\ell}=a^{b^{k+\ell}}=a_{k+\ell}$, we have
\begin{align}\label{Eq3}
xy
&=a_1^{\varepsilon_1}a_2^{\varepsilon_2}\cdots a_s^{\varepsilon_s}(a_1^{\theta_1}a_2^{\theta_2}\cdots a_s^{\theta_s})^{b^{-m}}b^mb^{n}\\
&=a_1^{\varepsilon_1}a_2^{\varepsilon_2}\cdots a_s^{\varepsilon_s}a_{1-m}^{\theta_1}a_{2-m}^{\theta_2}\cdots a_{s-m}^{\theta_s}b^{m+n}\nonumber\\
&=a_1^{\varepsilon_1+\theta_{1+m}}a_2^{\varepsilon_2+\theta_{2+m}}\cdots a_s^{\varepsilon_s+\theta_{s+m}}b^{m+n},\nonumber
\end{align}
where subscripts are counted modulo $s$.

First assume that $s\geq3$ is odd. In this case, let $V$ be the vector space over $\mathbb{C}$ with basis $e_1,e_2,\ldots,e_s$,
and for $x$ as in~\eqref{Eq1}, let $\rho(x)$ be the linear transformation on $V$ such that
\begin{equation}\label{Eq2}
e_i^{\rho(x)}=(-1)^{-\varepsilon_{2-2i}+\sum^s_{k=1}\varepsilon_k}e_{i+m(s-1)/2}\ \text{ for all }i\in\{1,2,\dots,s\},
\end{equation}
where subscripts are counted modulo $s$. It follows that
\begin{align*}
(e_i^{\rho(x)})^{\rho(y)}
&=((-1)^{-\varepsilon_{2-2i}+\sum^s_{k=1}\varepsilon_k}e_{i+m(s-1)/2})^{\rho(y)}\\
&=(-1)^{-\varepsilon_{2-2i}+\sum^s_{k=1}\varepsilon_k}e_{i+m(s-1)/2}^{\rho(y)}\\
&=(-1)^{-\varepsilon_{2-2i}+\sum^s_{k=1}\varepsilon_k}(-1)^{-\theta_{2-2(i+m(s-1)/2)}+\sum^s_{k=1}\theta_k}e_{i+m(s-1)/2+n(s-1)/2}\\
&=(-1)^{-\varepsilon_{2-2i}+\sum^s_{k=1}\varepsilon_k}(-1)^{-\theta_{2-2i+m}+\sum^s_{k=1}\theta_k}e_{i+(m+n)(s-1)/2}\\
&=(-1)^{-(\varepsilon_{2-2i}+\theta_{2-2i+m})+\sum^s_{k=1}(\varepsilon_k+\theta_{k+m})}e_{i+(m+n)(s-1)/2}\\
&=e_i^{\rho(xy)}
\end{align*}
for $i\in \{1,2,\ldots,s\}$. Hence $\rho$ is a representation of $R$ on $V$.
For $i,j\in\{1,2,\dots,s\}$, let $E_{i,j}$ be the $s\times s$ matrix with $(i,j)$-entry $1$ and other entries $0$.
With respect to the basis $e_1,e_2,\ldots,e_s$ we deduce from~\eqref{Eq2} that
\[
\rho(ab)=\rho(a_sb)=
\begin{pmatrix}
&-I_{(s+1)/2}\\
-I_{(s-1)/2} &\\
\end{pmatrix}
+2E_{1,(s+1)/2}\ \text{ and }\ \rho(b)=\begin{pmatrix}
&I_{(s+1)/2}\\
I_{(s-1)/2} &\\
\end{pmatrix},
\]
which yields
\[
\rho(ab)+\rho(b)=2E_{1,(s+1)/2}.
\]
Since $2E_{1,(s+1)/2}$ is non-diagonalizable, we conclude from Lemma~\ref{LEMrepre2} that $\Gamma_s$ is non-diagonalizable.

Next assume that $s\geq2$ is even.
For each integer $t$, denote
\[
\overline{t}=t\bmod2=
\begin{cases}
0, \ &
\ \text{ if $t$ is even, }\\
1, \ &
\ \text{ if $t$ is odd. }
\end{cases}
\]
In this case, let $V$ be the vector space over $\mathbb{C}$ with basis $e_1,e_2$,
and for $x$ as in~\eqref{Eq1}, let $\rho(x)$ be the linear transformation on $V$ such that
\[
e_i^{\rho(x)}=(-1)^{\delta_i}(\overline{m+i})e_1+(-1)^{\delta_i}(\overline{m+i+1})e_2 \ \text{ for all }i\in\{1,2\},
\]
where $\delta_i=\sum^{s/2-1}_{k=0}\varepsilon_{2k+i}$ for $i\in \{1,2\}$.
Thus by~\eqref{Eq1} and~\eqref{Eq3}, with respect to the basis $e_1,e_2$ we have
\[
\rho(x)=
\begin{pmatrix}
(-1)^{\delta_1}\overline{m+1}&(-1)^{\delta_1}\overline{m}\\
(-1)^{\delta_2}\overline{m}&(-1)^{\delta_2}\overline{m+1}\\
\end{pmatrix},
\]
\[
\rho(y)=
\begin{pmatrix}
(-1)^{\sigma_1}\overline{n+1}&(-1)^{\sigma_1}\overline{n}\\
(-1)^{\sigma_2}\overline{n}&(-1)^{\sigma_2}\overline{n+1}\\
\end{pmatrix},
\]
\[
\rho(xy)=
\begin{pmatrix}
(-1)^{\gamma_1}\overline{m+n+1}&(-1)^{\gamma_1}\overline{m+n}\\
(-1)^{\gamma_2}\overline{m+n}&(-1)^{\gamma_2}\overline{m+n+1}\\
\end{pmatrix},
\]
where $\sigma_i=\sum^{s/2-1}_{k=0}\theta_{2k+i}$ and $\gamma_i=\sum^{s/2-1}_{k=0}\big(\varepsilon_{2k+i}+\theta_{2k+i+m}\big)$ with the subscripts of $\theta$ counted modulo s for $i\in \{1,2\}$.
Since $\gamma_i=\delta_i+(\overline{m+i})\sigma_1+(\overline{m+i+1})\sigma_2$, a straightforward calculation shows that $\rho(xy)=\rho(x)\rho(y)$.
Hence $\rho$ is a representation of $R$ on $V$. Since
\[
\rho(ab)+\rho(b)=
\begin{pmatrix}
0&1\\
-1&0\\
\end{pmatrix}
+
\begin{pmatrix}
0&1\\
1&0\\
\end{pmatrix}
=
\begin{pmatrix}
0&2\\
0&0\\
\end{pmatrix}
\]
is non-diagonalizable, we conclude from Lemma~\ref{LEMrepre2} that $\Gamma_s$ is non-diagonalizable. This completes the proof of part~(e).
\end{proof}

\section{The non-diagonalizable vertex-primitive digraph $\Sigma$}\label{sec4}

Recall from Construction~\ref{CONver-pri} that $\Sigma=\Cay(R,S)$ with
\begin{align*}
R&=\langle a,b\mid a^7=b^3=1,\,b^{-1}ab=a^2\rangle\times\langle c,d\mid c^7=d^3=1,\,d^{-1}cd=c^2\rangle,\\
S&=(S_1\cup S_1^{-1})(S_3\cup S_3^{-1})^\gamma\cup(S_3\cup S_3^{-1})(S_1\cup S_1^{-1})^\gamma\cup S_1S_2^\gamma\cup S_2S_1^\gamma\cup S_1^{-1}S_4^\gamma\cup S_4(S_1^{-1})^\gamma,
\end{align*}
where $\gamma$ is the automorphism of $R$ interchanging $a$ with $c$ and $b$ with $d$, and
\begin{align*}
S_1&=\{a,\,a^5,\,a^6b,\,a^6b^2\},\ \ S_2=\{ab,\,(ab)^{-1}\},\\
S_3&=\{a^3,\,b,\,ab^2,\,a^4b^2\},\ \ S_4=\{a^2b,\,(a^2b)^{-1}\}.
\end{align*}

The following lemma gives basic properties on $R$ and $S$.

\begin{lemma}\label{LEMRS}
The following hold:
\begin{enumerate}[\rm(a)]
\item $R\cong(C_7\rtimes C_3)^2$;
\item $S=(S_1\sqcup S_1^{-1})(S_3\sqcup S_3^{-1})^\gamma\sqcup(S_3\sqcup S_3^{-1})(S_1\sqcup S_1^{-1})^\gamma\sqcup S_1S_2^\gamma\sqcup S_2S_1^\gamma\sqcup S_1^{-1}S_4^\gamma\sqcup S_4(S_1^{-1})^\gamma$;
\item $|S|=160$.
\end{enumerate}
\end{lemma}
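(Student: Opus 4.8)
The plan is to handle the three parts in turn: part (a) by recognising a standard presentation, and parts (b)--(c) by exploiting the direct-product structure $R=P\times Q$ with $P=\langle a,b\rangle$ and $Q=\langle c,d\rangle=P^\gamma$.

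For part (a), I would verify that each factor is a copy of $C_7\rtimes C_3$. In $P=\langle a,b\mid a^7=b^3=1,\,b^{-1}ab=a^2\rangle$ the subgroup $\langle a\rangle\cong C_7$ is normalised by $b$, and the automorphism $a\mapsto a^2$ it induces has order $3$ since $2^3\equiv1\pmod 7$ while $2\not\equiv1\pmod 7$; as $b$ has order $3$, these are exactly the relations of the split extension $C_7\rtimes C_3$ of order $21$. The same holds for $\langle c,d\rangle$, so $R\cong(C_7\rtimes C_3)^2$, of order $441$.

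For parts (b) and (c) the key fact is that, because $P\cap Q=1$ and $P$ centralises $Q$, every element of $R$ factors uniquely as $pq$ with $p\in P$ and $q\in Q$. Each of the six sets whose union defines $S$ has the form $TU^\gamma$ with $T,U\subseteq P$ (hence $U^\gamma\subseteq Q$); unique factorisation makes the map $(t,u)\mapsto tu^\gamma$ injective, so $|TU^\gamma|=|T|\,|U|$, and shows that $pq\in TU^\gamma$ if and only if $p\in T$ and $q\in U^\gamma$. Thus everything reduces to intersections and sizes of explicit subsets of $P$. To get these I would first record, from $b^{-1}ab=a^2$ (equivalently $bab^{-1}=a^4$), the inversion rules $(a^i b)^{-1}=a^{-2i}b^2$ and $(a^i b^2)^{-1}=a^{-4i}b$ with exponents read modulo $7$. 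Applying them gives, for instance, $S_1^{-1}=\{a^6,a^2,a^2b^2,a^4b\}$ and $S_3^{-1}=\{a^4,b^2,a^3b,a^5b\}$, whence $S_1\cap S_1^{-1}=\emptyset$ and $S_3\cap S_3^{-1}=\emptyset$; since $|S_1|=|S_3|=4$ and $|S_2|=|S_4|=2$ with the listed elements distinct, the six terms have sizes $8\cdot8$, $8\cdot8$, $4\cdot2$, $2\cdot4$, $4\cdot2$, $2\cdot4$.

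The remaining and most delicate step, which I expect to be the main obstacle, is to show that the six terms are pairwise disjoint; this promotes the $\cup$ in the definition to the $\sqcup$ of part (b) and, with the sizes above, yields $|S|=64+64+8+8+8+8=160$ for part (c). For this I would attach to each term its $P$-component $T$ and $Q$-component $U^\gamma$ and use that two terms can meet only when both their $P$-components and their $Q$-components meet. Running over the $\binom{6}{2}=15$ pairs, the $P$-components $S_1$, $S_1^{-1}$, $S_2$, $S_3\cup S_3^{-1}$, $S_4$ turn out to be pairwise disjoint for all but two pairs, namely $(S_1\cup S_1^{-1})(S_3\cup S_3^{-1})^\gamma$ against $S_1S_2^\gamma$ and against $S_1^{-1}S_4^\gamma$; for these two the $Q$-components settle the matter, since $S_3\cup S_3^{-1}$ is disjoint from both $S_2$ and $S_4$. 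This is a finite verification with no conceptual content, and the only genuine care required lies in computing the inverses correctly, as conjugation by $b$ twists the exponent of $a$.
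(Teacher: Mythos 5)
Your proposal is correct and takes essentially the same route as the paper's proof: compute $S_1^{-1}=\{a^6,a^2,a^2b^2,a^4b\}$ and $S_3^{-1}=\{a^4,b^2,a^3b,a^5b\}$ explicitly, verify the pairwise disjointness of $S_1,S_2,S_3,S_4,S_1^{-1},S_3^{-1}$ inside $\langle a,b\rangle$, and use the unique factorization $R=\langle a,b\rangle\times\langle c,d\rangle$ to promote the unions to disjoint unions and multiply cardinalities, giving $|S|=64+64+8+8+8+8=160$. The only difference is one of explicitness: you spell out the unique-factorization reduction, the inversion rules $(a^ib)^{-1}=a^{-2i}b^2$ and $(a^ib^2)^{-1}=a^{-4i}b$, and the pair-by-pair identification of the two pairs of terms whose disjointness must be settled by the $Q$-components, all of which the paper compresses into its stated disjointness observations.
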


\begin{proof}
Part~(a) is obvious. Next we prove parts~(b) and~(c).
Observe that $S_i\cap S_j=\emptyset$ for all $i,j\in \{1,2,3,4\}$ with $i\neq j$. Moreover, since
\[
S_1^{-1}=\{a^6,\,a^2,\,a^2b^2,\,a^4b\}\ \text{ and }\ S_3^{-1}=\{a^4,\,b^2,\,a^3b,\,a^5b\},
\]
we observe that the sets $S_1\cup S_2\cup S_3\cup S_4$, $S_1^{-1}$ and $S_3^{-1}$ are pairwise disjoint.
Thus
\[
S=(S_1\sqcup S_1^{-1})(S_3\sqcup S_3^{-1})^\gamma\sqcup(S_3\sqcup S_3^{-1})(S_1\sqcup S_1^{-1})^\gamma\sqcup S_1S_2^\gamma\sqcup S_2S_1^\gamma\sqcup S_1^{-1}S_4^\gamma\sqcup S_4(S_1^{-1})^\gamma,
\]
proving part~(b). As a consequence,
\begin{align*}
|S|&=|(S_1\sqcup S_1^{-1})||(S_3\sqcup S_3^{-1})^{\gamma}|+|(S_3\sqcup S_3^{-1})||(S_1\sqcup S_1^{-1})^\gamma|\\
&\phantom{=}+|S_1||S_2^\gamma|+|S_2||S_1^\gamma|+|S_1^{-1}||S_4^\gamma|+|S_4||S_1^{-\gamma}|\\
&=(4+4)\cdot(4+4)+(4+4)\cdot(4+4)+4\cdot2+4\cdot2+4\cdot2+4\cdot2\\
&=160,
\end{align*}
as part~(c) states.
\end{proof}

Recall the group homomorphism $\phi\colon\GL(2,q)\rightarrow\PGL(2,q)\leq\Sym(\PG(1,q))$ defined at the end of Section~\ref{sec2}. For our convenience, we identify $R$ with a permutation group on $\PG(1,7)\times\PG(1,7)$ by letting
\[
a\colon(x,y)\mapsto(x+1,\,y),\ b\colon(x,y)\mapsto(2x,\,y),\ c\colon(x,y)\mapsto(x,\,y+1),\ d\colon(x,y)\mapsto(x,\,2y).
\]
We also fix the following notation throughout this section. Let
\[
s\colon(x,y)\mapsto\left(\frac{2x+1}{x+1},\,y\right),\ t\colon(x,y)\mapsto\left(\frac{-1}{x},\,y\right),\ \alpha\colon (x,y)\mapsto\left(\frac{-x}{x+1},\,\frac{-y}{y+1}\right)
\]
be elements of $\PGL(2,7)\times\PGL(2,7)$, and let
\[
\beta\colon(x,y)\mapsto(y,x)
\]
be a permutation on $\PG(1,7)\times\PG(1,7)$. Then $R$ is normalized by $\beta$,
and the automorphism of $R$ induced by $\beta$ is equal to $\gamma$ (recall that $\gamma$ is the automorphism of $R$ interchanging $a$ with $c$ and $b$ with $d$). Let
\begin{equation}\label{Eq12}
u=s^\beta,\ v=t^\beta,\ g_1=a^4c^5,\ g_2=a^2c^3d^2,
\end{equation}
and let
\[
G=\langle a,b,c,d,t,v,\alpha,\beta\rangle,\ H=\langle s,t,u,v,\alpha,\beta\rangle.
\]

Under the above notation, we have the following lemma.

\begin{lemma}\label{LEMorder}
We have $|s^2|=|u^2|=|t|=|v|=|\alpha|=|\beta|=2$, $s=(\alpha t)^2$ and $u=(\alpha v)^2$.
\end{lemma}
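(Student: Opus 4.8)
The plan is to exploit the fact that each of $s$, $t$, $\alpha$, $u$, $v$ acts as a linear fractional transformation on one or both of the two copies of $\PG(1,7)$, together with the homomorphism $\phi\colon\GL(2,7)\to\PGL(2,7)$ recalled at the end of Section~\ref{sec2}; this converts every assertion into a short matrix computation in $\GL(2,7)$ taken modulo scalars. Concretely, reading off coefficients from $\phi_A(x)=(ax+c)/(bx+d)$, the first-coordinate actions of $s$, $t$ and $\alpha$ are $\phi_{S_0}$, $\phi_{T_0}$ and $\phi_{A_0}$ with
\[
S_0=\begin{pmatrix}2&1\\1&1\end{pmatrix},\qquad T_0=\begin{pmatrix}0&1\\-1&0\end{pmatrix},\qquad A_0=\begin{pmatrix}-1&1\\0&1\end{pmatrix},
\]
while $\alpha$ acts by the same rule $\phi_{A_0}$ on the second coordinate, both $t$ and $s$ fix the second coordinate, and $\beta$ merely swaps the two coordinates (so $|\beta|=2$ is immediate).

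For the orders I would compute over $\bbF_7$ that $T_0^2=-I$ and $A_0^2=I$, whence $t^2=\alpha^2=1$, and since neither $t$ nor $\alpha$ is trivial, $|t|=|\alpha|=2$. For $s$ I would note that $S_0^2=\left(\begin{smallmatrix}5&3\\3&2\end{smallmatrix}\right)$ is not a scalar matrix while $S_0^4=-I$ over $\bbF_7$, so $\phi_{S_0}$ has order $4$ and hence $|s^2|=2$. Because $u=s^\beta$ and $v=t^\beta$ are conjugates of $s$ and $t$, and conjugation preserves order, this gives $|u^2|=|s^2|=2$ and $|v|=|t|=2$ at once.

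For the identity $s=(\alpha t)^2$ I would argue coordinatewise. On the second coordinate $t$ acts trivially and $\alpha^2=1$, so $(\alpha t)^2$ is trivial there, matching $s$. On the first coordinate, since $\phi$ is a homomorphism, $\alpha t$ is realized by the matrix product $A_0T_0$, and a direct multiplication over $\bbF_7$ gives
\[
(A_0T_0)^2=\begin{pmatrix}-1&-1\\-1&0\end{pmatrix}^2=\begin{pmatrix}2&1\\1&1\end{pmatrix}=S_0,
\]
so $(\alpha t)^2=\phi_{S_0}=s$ on the first coordinate as well. The identity $u=(\alpha v)^2$ would then follow by applying conjugation by $\beta$ to $s=(\alpha t)^2$: one checks $\alpha^\beta=\alpha$ (the rule defining $\alpha$ is symmetric in the two coordinates), while $t^\beta=v$ and $s^\beta=u$ by~\eqref{Eq12}, whence $u=s^\beta=(\alpha^\beta t^\beta)^2=(\alpha v)^2$.

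The computations themselves are routine, so the only genuine care needed is bookkeeping about conventions: I must fix the left-to-right composition convention under which $\phi$ is a homomorphism (so that $\alpha t$ corresponds to $A_0T_0$, not $T_0A_0$, the latter giving the wrong transformation), and remember that matrix identities need only hold up to a nonzero scalar. The pleasant feature here is that the relevant identities hold exactly on the level of matrices, which keeps every step transparent.
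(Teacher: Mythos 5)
Your proposal is correct and follows essentially the same route as the paper: both realize $s$, $t$, $\alpha$ as images of the matrices $\left(\begin{smallmatrix}2&1\\1&1\end{smallmatrix}\right)$, $\left(\begin{smallmatrix}0&1\\-1&0\end{smallmatrix}\right)$, $\left(\begin{smallmatrix}-1&1\\0&1\end{smallmatrix}\right)$ under $\phi$ (respecting the left-to-right convention so that $\alpha t$ corresponds to $A_0T_0$), verify the orders by the same computations over $\bbF_7$, obtain $(\alpha t)^2=s$ from $(A_0T_0)^2=S_0$, and deduce $u=(\alpha v)^2$ by conjugating with $\beta$ using $\alpha^\beta=\alpha$. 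Your explicit remark that $S_0^2$ is non-scalar (so $s^2\neq 1$) is a small point the paper leaves implicit, but otherwise the two arguments coincide.
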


\begin{proof}
From the definitions of $\beta$, $u$ and $v$ we see that $|\beta|=2$, $|u|=|s|$ and $|v|=|t|$.
Note that
\[
s=(\phi\times\phi)\left(\left(\begin{matrix} 2&1\\1&1\end{matrix}\right), I\right),\ t=(\phi\times\phi)\left(\left(\begin{matrix} 0& 1 \\ -1& 0\end{matrix}\right), I\right),\ \alpha=(\phi\times\phi)\left(\left(\begin{matrix} -1&1\\0& 1\end{matrix}\right),\left(\begin{matrix} -1&1\\0& 1\end{matrix}\right)\right).
\]
It follows from
\[
\left(\left(\begin{matrix} 2&1\\1&1\end{matrix}\right)^2\right)^2=\left(\begin{matrix} 5&3\\3&2\end{matrix}\right)^2=
\left(\begin{matrix} -1&0\\0&-1\end{matrix}\right)=\left(\begin{matrix} 0&1\\-1&0\end{matrix}\right)^2\ \text{ and } \
\left(\begin{matrix} -1&1\\0& 1\end{matrix}\right)^2=\left(\begin{matrix} 1&0\\0& 1\end{matrix}\right)
\]
that $|s^2|=|t|=|\alpha|=2$. As a consequence, $|u^2|=|v|=2$. Moreover,
\begin{align*}
(\alpha t)^2&=(\phi\times\phi)\left(\left(\left(\begin{matrix} -1&1\\0&1\end{matrix}\right)\left(\begin{matrix} 0&1\\-1&0\end{matrix}\right)\right)^2, \left(\begin{matrix} -1&1\\0&1\end{matrix}\right)^2\right)\\
&=(\phi\times\phi)\left(\left(\begin{matrix} -1&-1\\-1&0\end{matrix}\right)^2, I\right)\\
&=(\phi\times\phi)\left(\left(\begin{matrix} 2&1\\1&1\end{matrix}\right), I\right)=s.
\end{align*}
This together with the observation $\alpha^\beta=\alpha$ yields $u=s^\beta=((\alpha t)^2)^\beta=(\alpha^\beta t^\beta)^2=(\alpha v)^2$.
\end{proof}

From the definition of $G$ and the previous lemma, we see that $H$ and $R$ are both subgroups of $G$. The following lemma reveals the relation between $G$, $H$ and $R$.

\begin{lemma}\label{LEMmaxsub}
The group $H=(\langle s,t\rangle\times\langle u,v\rangle)\rtimes(\langle\alpha\rangle\times\langle\beta\rangle)\cong(D_8\times D_8)\rtimes C_2^2$ is maximal in $G\cong(\PSL(2,7)\times\PSL(2,7))\rtimes C_2^2$ with right transversal $R$.
\end{lemma}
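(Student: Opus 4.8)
The plan is to pin down the isomorphism types of $G$ and $H$ first, deduce the transversal statement from an order count, and reserve the real work for maximality, which I would handle by a Goursat analysis of the overgroups of $H\cap K$, where $K=\PSL(2,7)\times\PSL(2,7)$.

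First I would identify the two socle factors. Using the matrices recorded in Lemma~\ref{LEMorder}, a direct computation gives $tst^{-1}=s^{-1}$, so $\langle s,t\rangle$ is dihedral of order $8$; since $s,t$ have square determinant it lies in the first $\PSL(2,7)$, and $\langle u,v\rangle=\langle s,t\rangle^{\beta}\cong D_8$ lies in the second. As these act on different coordinates they commute and meet trivially, so $\langle s,t\rangle\times\langle u,v\rangle\cong D_8\times D_8$. For the big group, $\langle a,b\rangle$ is the stabiliser of $\infty$ in the first $\PSL(2,7)$, a maximal subgroup of this $2$-transitive group, and $t$ moves $\infty$; hence $\langle a,b,t\rangle=\PSL(2,7)$, and symmetrically $\langle c,d,v\rangle=\PSL(2,7)$, so $K:=\langle a,b,c,d,t,v\rangle=\PSL(2,7)\times\PSL(2,7)$.

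Next I would assemble the semidirect products. Both $\alpha$ and $\beta$ normalise $K$ (the former by conjugation inside $\PGL(2,7)\times\PGL(2,7)$, the latter by swapping coordinates), $\langle\alpha,\beta\rangle\cong C_2^2$ by Lemma~\ref{LEMorder} together with a quick commuting check, and $\langle\alpha,\beta\rangle\cap K=1$ because $\alpha$ has nonsquare determinants while $\beta,\alpha\beta$ involve the swap; thus $G=K\rtimes\langle\alpha,\beta\rangle\cong(\PSL(2,7)\times\PSL(2,7))\rtimes C_2^2$. The same two elements normalise $D_8\times D_8$: $\beta$ interchanges the two $D_8$'s, and $\alpha$ normalises $\langle s,t\rangle$ since $\langle s,t,\alpha\rangle=\langle\alpha t,t\rangle\cong D_{16}$ contains $\langle s,t\rangle$ (with $s=(\alpha t)^2$) as an index-$2$, hence normal, subgroup. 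So $H=(D_8\times D_8)\rtimes C_2^2$. Now $|H|=256$, $|R|=21^2$ and $|G|=168^2\cdot4$ give $|H|\,|R|=|G|$, while $H\cap R\le(D_8\times D_8)\cap R$ has order dividing $\gcd(64,441)=1$; hence $H\cap R=1$ and $R$ is a right transversal of $H$ in $G$.

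The hard part is maximality. I would take $H\le L\le G$, set $M=L\cap K$, and note that $D_8\times D_8\le M\le K$, that $M$ is $\langle\alpha,\beta\rangle$-invariant (being normal in $L\supseteq H$), and that $LK=G$. The goal is to show $M\in\{D_8\times D_8,\,K\}$. By Goursat's lemma applied to $M\le\PSL(2,7)\times\PSL(2,7)$, writing $N_1\times1=M\cap(\PSL(2,7)\times1)$, $1\times N_2=M\cap(1\times\PSL(2,7))$ and $M_i$ for the two projections, one has $N_i\trianglelefteq M_i$ and $M_1/N_1\cong M_2/N_2$, with $D_8\le N_i\le M_i\le\PSL(2,7)$. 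The overgroups of a Sylow $2$-subgroup $D_8$ in $\PSL(2,7)$ are exactly $D_8$, two copies of $S_4$, and $\PSL(2,7)$; since the two $S_4$-classes of $\PSL(2,7)$ are fused by the outer automorphism induced by $\alpha$, the $\alpha$-invariance of $N_i,M_i$ forces them into $\{D_8,\PSL(2,7)\}$, and $D_8\ntrianglelefteq\PSL(2,7)$ then rules out $N_i\neq M_i$; so $M=N_1\times N_2$ with $N_i\in\{D_8,\PSL(2,7)\}$, and $\beta$-invariance kills the mixed cases $D_8\times\PSL(2,7)$ and $\PSL(2,7)\times D_8$, leaving $M\in\{D_8\times D_8,\,K\}$. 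Finally $M=D_8\times D_8$ forces $|L|=4|M|=256=|H|$, so $L=H$, whereas $M=K$ forces $K\le L$ and $L=LK=G$; thus $H$ is maximal. The crux, and the only place needing genuine input, is the overgroup lattice of $D_8$ in $\PSL(2,7)$ together with the $S_4$-fusion by $\alpha$, since this is exactly what collapses the Goursat possibilities to the two desired extremes.
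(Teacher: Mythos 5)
Your proposal is correct, and at the crux (maximality) it takes a genuinely different route from the paper. The paper reduces everything to a single factor: it proves that $\langle s,t,\alpha\rangle\cong D_{16}$ is maximal in $\langle a,b,t,\alpha\rangle\cong\PGL(2,7)$ by index arithmetic (any overgroup has index dividing $21$; index $3$ would force the overgroup to contain $\PSL(2,7)$, and index $7$ is excluded by citing \cite[Table~2.1]{DM1996}), transports this by $\beta$ to the second factor, and then asserts maximality of $H$ in $G$ as a consequence. You instead work inside the socle: for $H\le L\le G$ you set $M=L\cap K$, note that $M$ is $\langle\alpha,\beta\rangle$-invariant and $LK=G$, and apply Goursat's lemma, with the external input being the overgroup lattice of a Sylow $D_8$ in $\PSL(2,7)$ (namely $D_8$, one $S_4$ from each of the two conjugacy classes, and the whole group) together with the fact that the outer automorphism induced by $\alpha$ fuses the two $S_4$-classes, while $\beta$-invariance eliminates the mixed products $D_8\times\PSL(2,7)$ and $\PSL(2,7)\times D_8$. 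Both arguments rest on comparable classified facts about $\PSL(2,7)$ and $\PGL(2,7)$, but your version buys something the paper's does not make explicit: since $G$ is neither $\PGL(2,7)\times\PGL(2,7)$ nor the full wreath product $\PGL(2,7)\wr C_2$ (it is the index-$2$ subgroup of the latter in which the two coordinates lie in the same $\PSL(2,7)$-coset), the passage from maximality of $D_{16}$ in each $\PGL(2,7)$ factor to maximality of $H$ in $G$ genuinely requires an overgroup analysis of exactly the kind you carry out, so your Goursat argument fills in the paper's terse final step. The identification of $G$ and the transversal claim are essentially as in the paper; your derivation of $\langle a,b,t\rangle=\PSL(2,7)$ from maximality (primitivity) of the point stabilizer $C_7\rtimes C_3$ replaces the paper's index-at-most-$4$-plus-simplicity count, and both are fine.

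One small slip: the asserted containment $H\cap R\le(D_8\times D_8)\cap R$ is not justified as written (an element of $H\cap R$ could a priori have a nontrivial $\langle\alpha\rangle\times\langle\beta\rangle$-component). It is also unnecessary: $|H\cap R|$ divides $\gcd(|H|,|R|)=\gcd(2^8,3^2\cdot 7^2)=1$ directly, which is the paper's own argument, so the transversal conclusion stands unaffected.
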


\begin{proof}
It is straightforward to verify that $s^t=s^{-1}$, $u^v=u^{-1}$, and
\[
H=(\langle s,t\rangle\times\langle u,v\rangle)\rtimes(\langle\alpha\rangle\times\langle\beta\rangle).
\]
Since $s^t=s^{-1}$ and $u^v=u^{-1}$, we derive from Lemma~\ref{LEMorder} that $\langle s,t\rangle\cong\langle u,v\rangle\cong D_8$, and so
\[
H\cong(D_8\times D_8)\rtimes C_2^2.
\]
In particular, $|H|=2^8$. From Lemma~\ref{LEMorder} we see that $s=(\alpha t)^2$ and $u=(\alpha v)^2$. Hence $H\leq G$.
Observe that
\[
a=(\phi\times\phi)\left(\left(\begin{matrix} 1&0\\1&1\end{matrix}\right), I\right),\ b=(\phi\times\phi)\left(\left(\begin{matrix} 2& 0 \\ 0& 1\end{matrix}\right), I\right),\ t=(\phi\times\phi)\left(\left(\begin{matrix} 0&1\\-1& 0\end{matrix}\right),I\right).
\]
Since $\langle a,b\rangle\cong C_7\rtimes C_3$ has index $8$ in $\PSL(2,7)$ and its order is coprime to $|t|=2$, it follows that the index of $\langle a,b,t\rangle$ in $\PSL(2,7)$ is at most $4$. Since $\PSL(2,7)$ is a simple group of order $168$, we then obtain $\langle a,b,t\rangle\cong\PSL(2,7)$.
Moreover,
\[
\alpha=(\phi\times\phi)\left(\left(\begin{matrix} -1&1\\0& 1\end{matrix}\right), \left(\begin{matrix} -1&1\\0& 1\end{matrix}\right)\right)\ \ \text{with}\ \ \det\left(\begin{matrix} -1&1\\0&1\end{matrix}\right)=-1,
\]
and $-1$ is not a square in $\bbF_7$. We conclude that $\langle a,b,t,\alpha\rangle\cong\langle c,d,v,\alpha\rangle\cong\PGL(2,7)$ and
\[
G=(\langle a,b,t\rangle\times\langle c,d,v\rangle)\rtimes(\langle\alpha\rangle\times\langle\beta\rangle),
\]
which implies that
\[
G\cong(\PSL(2,7)\times\PSL(2,7))\rtimes C_2^2.
\]
In particular, $|G|=2^8\cdot3^2\cdot7^2$.

By Lemma~\ref{LEMorder}, we have $s=(\alpha t)^2$, $|t|=|\alpha|=2$ and $|s|=4$, it follows that $\langle s,t,\alpha\rangle=\langle t,\alpha\rangle\cong D_{16}$.
Let $M$ be a maximal subgroup of $\langle a,b,t,\alpha\rangle\cong\PGL(2,7)$ containing $\langle s,t,\alpha\rangle$.
Then $|\PGL(2,7)|/|M|$ divides $|\PGL(2,7)|/|\langle s,t,\alpha\rangle|=336/16=21$.
If $|\PGL(2,7)|/|M|=3$, then $M$ would contain $\PSL(2,7)$, not possible. Moreover, \cite[Table~2.1]{DM1996} shows that $\langle a,b,t,\alpha\rangle\cong\PGL(2,7)$ has no subgroup of index $7$. Thus $|\PGL(2,7)|/|M|=21$, and so $D_{16}\cong\langle s,t,\alpha\rangle=M$. Since $M$ is maximal in $\langle a,b,t,\alpha\rangle$, it follows that $\langle u,v,\alpha\rangle=\langle s,t,\alpha\rangle^\beta$ is maximal in $\langle a,b,t,\alpha\rangle^\beta=\langle c,d,v,\alpha\rangle$. As a consequence, $H$ is maximal in $G$.

Finally, the fact that $|R|=3^2\cdot7^2$ is prime to $|H|=2^8$ yields $R\cap H=1$. This together with $|G|=2^8\cdot3^2\cdot7^2=|H||R|$ implies that $R$ forms a right transversal of $H$ in $G$.
\end{proof}

For a subset $S$ of the group $G$, let $I_2(S)$ be the set of involutions of $S$. Recall the elements $g_1$ and $g_2$ of $R$ defined in~\eqref{Eq12}.

\begin{lemma}\label{LEMH}
We have $|H^{g_1}\cap H|=2$ and $|H^{g_2}\cap H|=8$.
\end{lemma}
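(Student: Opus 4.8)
The plan is to exploit the normal subgroup $L=\langle a,b,t\rangle\times\langle c,d,v\rangle\cong\PSL(2,7)\times\PSL(2,7)$ of index $4$ in $G$ identified in Lemma~\ref{LEMmaxsub}. Since $H\cap L=\langle s,t\rangle\times\langle u,v\rangle=:B$ is a Sylow $2$-subgroup of $L$ isomorphic to $D_8\times D_8$, and $H/B\cong G/L\cong C_2^2$, the first step is to pin down $B\cap B^{g_i}=(H\cap H^{g_i})\cap L$, and then to argue that $H\cap H^{g_i}$ has no part outside $L$, so that $|H\cap H^{g_i}|=|B\cap B^{g_i}|$.

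Because $g_1=a^4c^5$ and $g_2=a^2c^3d^2$ lie in $L$ and respect its two direct factors, conjugation splits coordinatewise: writing $P=\langle s,t\rangle$ and $P'=\langle u,v\rangle$, one has $B\cap B^{g_1}=(P\cap P^{a^4})\times(P'\cap P'^{c^5})$ and $B\cap B^{g_2}=(P\cap P^{a^2})\times(P'\cap P'^{c^3d^2})$. Each factor is the intersection of two Sylow $2$-subgroups of $\PSL(2,7)$, which I would compute directly from the action on $\PG(1,7)$ by listing the five involutions of a $D_8$ and their conjugates. Noting that $P'=P^\beta$ and $c^5=(a^5)^\beta$, the second factor for $g_1$ is the $\beta$-transport of $P\cap P^{a^5}$. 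This yields $P\cap P^{a^4}=1$ and $|P'\cap P'^{c^5}|=2$, hence $|B\cap B^{g_1}|=2$; and $|P\cap P^{a^2}|=2$ together with $|P'\cap P'^{c^3d^2}|=4$, hence $|B\cap B^{g_2}|=8$.

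The decisive point is the factor $P'\cap P'^{c^3d^2}$. A short computation shows $c^3d^2$ has order $3$ and acts on $\PG(1,7)$ as $y\mapsto 4y+5$, a permutation with cycle type $(0,5,4)(1,2,6)$; conjugation by it cyclically permutes the three involutions of the Klein four subgroup $\{1,u^2,v,u^2v\}$ of $P'$, so it normalizes this $C_2^2$ and forces $P'\cap P'^{c^3d^2}$ to have order $4$. This is possible precisely because the normalizer of a Klein four subgroup of $\PSL(2,7)$ is isomorphic to $S_4$ and so contains elements of order $3$, whereas the order-$7$ conjugators occurring for $g_1$ can relate two Sylow subgroups only through a shared involution, or not at all, giving intersections of order at most $2$. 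This contrast is exactly what separates the two values.

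It remains to show $H\cap H^{g_i}\subseteq L$, that is, that no element of $H$ mapping to $\alpha$, $\beta$, or $\alpha\beta$ in $C_2^2$ lies in $H^{g_i}$. Using $g_i\alpha g_i^{-1}=\alpha\,(g_i^{\alpha}g_i^{-1})$ and $g_i\beta g_i^{-1}=\beta\,(g_i^{\beta}g_i^{-1})$ with $g_i^{\alpha},g_i^{\beta}\in L$ computed from the matrix representatives, membership in $H^{g_i}$ reduces to whether certain odd-order elements (powers of $a$, respectively the order-$3$ factor) lie in prescribed products of cosets of Sylow $2$-subgroups inside $\PSL(2,7)$; a direct check shows they do not. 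Hence $H\cap H^{g_i}=B\cap B^{g_i}$, giving $|H^{g_1}\cap H|=2$ and $|H^{g_2}\cap H|=8$. I expect this last step to be the main obstacle, since $\beta$ interchanges the two $\PSL(2,7)$ factors and so the defining conditions no longer split coordinatewise, requiring the most careful bookkeeping; the preceding Sylow-intersection computations, though explicit, are routine.
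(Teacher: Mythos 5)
Your proposal is correct, and it takes a genuinely different route from the paper's. The paper argues by brute force: it writes every element of $H$ in the normal form $s^{k}t^{\ell}u^{m}v^{n}\alpha^{\epsilon}\beta^{\delta}$, works out the multiplication rule, explicitly tabulates all involutions of $H$, $H^{g_1}$ and $H^{g_2}$, intersects the tables, and then eliminates elements of order $4$ via the sign homomorphism $\chi(s^{k}t^{\ell}u^{m}v^{n}\alpha^{\epsilon}\beta^{\delta})=(-1)^{\ell+n}$, arriving at $H\cap H^{g_1}=\langle uv\rangle$ and $H\cap H^{g_2}=\langle s^2t\rangle\times\langle u^2,v\rangle$. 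Your reduction via the normal subgroup $L=\langle a,b,t\rangle\times\langle c,d,v\rangle\cong\PSL(2,7)\times\PSL(2,7)$, with $B=H\cap L\cong D_8\times D_8$ a Sylow $2$-subgroup of $L$ and $g_1,g_2\in L$ acting coordinatewise, replaces those tables with four small Sylow-intersection computations in $\PSL(2,7)$, and your factor values agree with the paper's answer: $P\cap P^{a^4}=1$, $P'\cap P'^{c^5}=\langle uv\rangle$, $P\cap P^{a^2}=\langle s^2t\rangle$, $P'\cap P'^{c^3d^2}=\langle u^2,v\rangle$. In particular, your decisive observation checks out against the matrix representatives: $c^3d^2$ conjugates $u^2\mapsto v\mapsto u^2v$, so it normalizes the Klein group $\langle u^2,v\rangle$, and since the Sylow $D_8$ is self-normalizing and $c^3d^2$ has odd order, $P'^{c^3d^2}\neq P'$ and the intersection is exactly this $C_2^2$. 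This is more conceptual than the paper's proof: it explains the asymmetry between $2$ and $8$ (order-$7$ versus order-$3$ components of the $g_i$) that the paper's long lists leave opaque. One correction, which removes the obstacle you flag at the end: the outer-coset analysis does split coordinatewise after all. Since $g_i\in L$, conjugation by $g_i$ fixes each coset of $L$, so for $\epsilon\in\{\alpha,\beta,\alpha\beta\}$ the coset $L\epsilon$ meets $H\cap H^{g_i}$ if and only if $h\epsilon=(k\epsilon)^{g_i}$ is solvable with $h,k\in B$, which unwinds to the single condition $\epsilon g_i\epsilon^{-1}\in Bg_iB$. Here $\epsilon g_i\epsilon^{-1}\in L$ splits as a pair and $Bg_iB$ is a direct product of double cosets, so each test is a pair of independent double-coset membership questions in $\PSL(2,7)$ — for instance $\epsilon=\beta$, $i=1$ asks whether $a^5\in Pa^4P$ and $c^4\in P'c^5P'$; the swap merely exchanges which coordinate each condition lands in. These finitely many checks all fail, giving $H\cap H^{g_i}=B\cap B^{g_i}$ as you claim. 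Writing out those membership checks is the one piece of your sketch still to be done, but it is routine and strictly lighter than the paper's enumeration, which in exchange produces the intersections as explicit subgroups directly.
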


\begin{proof}
Recall that
\begin{align*}
a&\colon(x,y)\mapsto(x+1,\,y),\ \ b\colon(x,y)\mapsto(2x,\,y),\ \ c\colon(x,y)\mapsto(x,\,y+1),\ \ d\colon(x,y)\mapsto(x,\,2y),\\
s&\colon(x,y)\mapsto\left(\frac{2x+1}{x+1},\,y\right),\ \ t\colon(x,y)\mapsto\left(\frac{-1}{x},\,y\right),\ \ \alpha\colon (x,y)\mapsto\left(\frac{-x}{x+1},\,\frac{-y}{y+1}\right),\\
\beta&\colon(x,y)\mapsto(y,x),\ \ u=s^\beta,\ \ v=t^\beta,\ \ g_1=a^4c^5,\ \ g_2=a^2c^3d^2.
\end{align*}
It is straightforward to verify that
\begin{align*}
&|a|=|c|=7,\ \ |b|=|d|=3,\ \ s^\alpha=s^3,\ \ t^\alpha=st,\ \ u^\alpha=u^3,\ \ v^\alpha=uv,\ \\
&g_1\colon(x,y)\mapsto(x+4,\,y+5),\ \ g_2\colon(x,y)\mapsto(x+2,\,4y+5).
\end{align*}
According to Lemmas~\ref{LEMorder} and~\ref{LEMmaxsub}, any elements $x$ and $y$ of $H$ can be written as
\begin{equation}\label{Eq20}
x=s^{k_1}t^{\ell_1}u^{m_1}v^{n_1}\alpha^{\epsilon_1}\beta^{\delta_1}\ \text{ and } \ y=s^{k_2}t^{\ell_2} u^{m_2}v^{n_2}\alpha^{\epsilon_2}\beta^{\delta_2}
\end{equation}
for some $k_1,k_2,m_1,m_2\in\{0,1,2,3\}$ and $\ell_1,\ell_2,n_1,n_2,\epsilon_1,\epsilon_2,\delta_1,\delta_2\in\{0,1\}$.
Since $u=s^\beta$, $v=t^\beta$, $s^t=s^{-1}$, $s^\alpha=s^3$, $u^\alpha=u^3$, $t^\alpha=st$, $v^\alpha=uv$, $(st)^{\ell_2}=s^{\ell_2}t^{\ell_2}$ and $(uv)^{n_2}=u^{n_2}v^{n_2}$, we have
\begin{align}\label{Eq21}
\nonumber xy&=s^{k_1}t^{\ell_1}u^{m_1}v^{n_1}\alpha^{\epsilon_1}\beta^{\delta_1}\cdot s^{k_2}t^{\ell_2} u^{m_2}v^{n_2}\alpha^{\epsilon_2}\beta^{\delta_2}\\
\nonumber &=s^{k_1}t^{\ell_1}u^{m_1}v^{n_1}(\alpha^{\epsilon_1}\beta^{\delta_1}s^{k_2}t^{\ell_2} u^{m_2}v^{n_2}\beta^{\delta_1}\alpha^{\epsilon_1})\beta^{\delta_2-\delta_1}\alpha^{\epsilon_2-\epsilon_1}\\
\nonumber &=s^{k_1}t^{\ell_1}u^{m_1}v^{n_1}(s^{3^{\epsilon_1}k_2+\epsilon_1\ell_2}t^{\ell_2}u^{3^{\epsilon_1}m_2+\epsilon_1n_2}v^{n_2})^{\beta^{\delta_1}}
\beta^{\delta_2-\delta_1}\alpha^{\epsilon_2-\epsilon_1},\\
&=\begin{cases}
s^{k_1+(-1)^{\ell_1}(3^{\epsilon_1}k_2+\epsilon_1\ell_2)}t^{\ell_1+\ell_2} u^{m_1+(-1)^{n_1}(3^{\epsilon_1}m_2+\epsilon_1n_2)}v^{n_1+n_2}\beta^{\delta_2}\alpha^{\epsilon_2-\epsilon_1} &\text{ if } \delta_1=0 \\
s^{k_1+(-1)^{\ell_1}(3^{\epsilon_1}m_2+\epsilon_1n_2)}t^{\ell_1+n_2} u^{m_1+(-1)^{n_1}(3^{\epsilon_1}k_2+\epsilon_1\ell_2)}v^{n_1+\ell_2}\beta^{\delta_2-1}\alpha^{\epsilon_2-\epsilon_1} &\text{ if } \delta_1=1.
\end{cases}
\end{align}

First consider elements $x$ of order $2$ in $H$. Since $x^2=1$, taking $x=y$ in~\eqref{Eq21} gives
\[
\begin{cases}
k_1+(-1)^{\ell_1}(3^{\epsilon_1}k_1+\epsilon_1\ell_1)\equiv0\pmod{4}\\
m_1+(-1)^{n_1}(3^{\epsilon_1}m_1+\epsilon_1n_1)\equiv0\pmod{4}\\
\delta_1=0
\end{cases}
\text{or }\
\begin{cases}
k_1+(-1)^{\ell_1}(3^{\epsilon_1}m_1+\epsilon_1n_1)\equiv0\pmod{4}\\
m_1+(-1)^{n_1}(3^{\epsilon_1}k_1+\epsilon_1\ell_1)\equiv0\pmod{4}\\
n_1+\ell_1\equiv0\pmod{2}\\
\delta_1=1.
\end{cases}
\]
Let $N=\langle s,t\rangle\times\langle u,v\rangle$.
It follows that
\begin{align*}
&I_2(\langle s,t\rangle)=\{s^2,t,st,s^2t,s^3t\},\quad I_2(N\alpha)=\{s^ju^k\alpha\mid j,k\in\{0,1,2,3\}\},\\
&I_2(N\beta)=\{s^jtu^jv\beta, s^ju^k\beta\mid j,k\in \{0,1,2,3\},\, j+k\equiv0\pmod{4}\},\\
&I_2(N\alpha\beta)=\{s^ju^j\alpha\beta\mid j\in \{0,1,2,3\}\}\cup\{(stv\alpha\beta)^{\beta^k},(s^2tu^3v\alpha\beta)^{\beta^k}\mid k\in\{0,1\}\}.
\end{align*}
Note that $I_2(\langle u,v\rangle)=I_2(\langle s,t\rangle^{\beta})$. It is straightforward to verify that
\begin{equation}\label{Eq25}
\begin{aligned}
I_2(\langle s,t\rangle^{g_1})={}&\{a^3bs^3t,a^4b^2s^2,b^2t,a^3b^2s,abs^2t\},\\
I_2(\langle u,v\rangle^{g_1})={}&\{cdu,cdu^2v,cd^2u^3v,uv,c^3u^3\},\\
I_2\big((N\alpha)^{g_1}\big)={}&\{a^3bc^4d^2s^3tu\alpha,a^3bc^6ds^3tu^2\alpha,
a^3bcs^3tv\alpha,a^3bc^4s^3tu^3\alpha,a^6bc^4d^2stu\alpha,c^4d^2u\alpha,\\
&a^6c^4d^2s^3u\alpha,a^6bc^6dstu^2\alpha,c^6du^2\alpha,
a^6c^6ds^3u^2\alpha,a^6bcstv\alpha,cv\alpha,a^6cs^3v\alpha,\\
&a^6bc^4stu^3\alpha,c^4u^3\alpha,a^6c^4s^3u^3\alpha\},\\
I_2\big((N\beta)^{g_1}\big)={}&\{a^2bc^3d^2s^2tu^2\beta,a^2b^2c^6d^2s^3tv\beta,ac^2d^2stu\beta,
a^4ds^3u^2v\beta,ac^6\beta,a^4c^5dtuv\beta,\\
&a^2bc^2dsu^3v\beta,a^2b^2c^5s^2u^3\beta\},\\
I_2\big((N\alpha\beta)^{g_1}\big)={}&\{a^5b^2c^2dsu^3v\alpha\beta,bc^5ds^2uv\alpha\beta,
a^2c^6t\alpha\beta,a^5c^5s^3u^3\alpha\beta,a^2c^2d^2u\alpha\beta,a^5c^6d^2s^3tv\alpha\beta,\\
&bds^2tu^2v\alpha\beta,a^5b^2c^3d^2stu^2\alpha\beta\},\\
I_2(\langle s,t\rangle^{g_2})={}&\{a^2bs^3,a^4s,s^2t,b^2t,a^2bst\},\\
I_2(\langle u,v\rangle^{g_2})={}&\{v,u^2v,c^3d^2uv,u^2,c^3d^2u\},\\
I_2\big((N\alpha)^{g_2}\big)={}&\{a^6c^6d^2stuv\alpha,a^6cstv\alpha,
a^6c^6d^2st\alpha,a^6cstu^3\alpha,a^4bc^6d^2uv\alpha,a^4b^2c^6d^2suv\alpha,\\
&a^3c^6d^2s^3uv\alpha,a^4bcv\alpha,a^4b^2csv\alpha,a^3cs^3v\alpha,
    a^4bc^6d^2\alpha,a^4b^2c^6d^2s\alpha,a^3c^6d^2s^3\alpha,\\
&a^4bcu^3\alpha,a^4b^2csu^3\alpha,a^3cs^3u^3\alpha\},\\
I_2\big((N\beta)^{g_2}\big)={}&\{ab^2c^5ds^2tu\beta,bc^3dstu^2v\beta,ab^2c^3s^2v\beta,
bc^4d^2suv\beta,ab^2c^3d\beta,bc^5ds^3u^3v\beta,\\
&ab^2c^4d^2tu^3\beta,bc^3s^3tu^2\beta\},\\
I_2\big((N\alpha\beta)^{g_2}\big)={}&\{a^6bc^6dstuv\alpha\beta,a^3b^2c^5d^2t\alpha\beta,
a^6bc^5u\alpha\beta,a^3b^2cds^3u^3\alpha\beta,a^6bc^6ds^2u^2\alpha\beta,\\
&a^3b^2cdsu^2v\alpha\beta,a^3b^2c^5d^2s^2tu^3v\alpha\beta,a^6bc^5s^3tv\alpha\beta\}.
\end{aligned}
\end{equation}
By Lemma~\ref{LEMmaxsub}, we have $H\cap R=1$ and
\[
I_2(H)=I_2(\langle s,t\rangle)\cup I_2(\langle u,v\rangle)\cup I_2(\langle s,t\rangle)I_2(\langle u,v\rangle)\cup I_2(N\alpha)\cup I_2(N\beta)\cup I_2(N\alpha\beta).
\]
Then we observe from~\eqref{Eq25} that
\[
H\cap I_2(H^{g_1})=\{uv\}\ \text{ and }\ H\cap I_2(H^{g_2})=\{s^2t,v,u^2v,u^2,s^2tv,s^2tu^2v,s^2tu^2\}.
\]
Therefore,
\begin{align}\label{Eq22}
x=
\begin{cases}
uv\ &\text{ if }\ x\in H\cap H^{g_1}\\
s^2t,v,u^2v,u^2,s^2tv,s^2tu^2v\text{ or }s^2tu^2\ &\text{ if }\ x\in H\cap H^{g_2}.
\end{cases}
\end{align}

Next suppose that $x\in H\cap H^{g_j}$ and $|x|=4$ for some $j\in\{1,2\}$. Then $x^2\in H\cap I_2(H^{g_j})$. Let
\begin{align}\label{Eq18}
\chi\colon s^kt^\ell u^mv^n\alpha^\epsilon\beta^\delta\mapsto(-1)^{\ell+n}
\end{align}
be the mapping from $H$ to the group $\{-1,1\}$, where $k,m\in\{0,1,2,3\}$ and $\ell,n,\epsilon,\delta\in\{0,1\}$.
Then for all $y,z\in H$ we derive from~\eqref{Eq21} that
\begin{equation}\label{Eq24}
\chi(yz)=\chi(y)\chi(z),
\end{equation}
that is, $\chi$ is a group homomorphism.
In particular, we have $\chi(z^2)=\chi(z)\chi(z)=1$ for all $z\in H$.
If $x\in H\cap H^{g_1}$, then by~\eqref{Eq22} we see that $x^2=uv$, but by \eqref{Eq24} we obtain
\[
1=\chi(x^2)=\chi(uv)=\chi(u)\chi(v)=1\cdot(-1)=-1,
\]
a contradiction.
Now $x\in H\cap H^{g_2}$, and since $x^2\in H\cap I_2(H^{g_2})$,~\eqref{Eq22} shows that
\[
x^2\in\{s^2t,v,u^2v,u^2,s^2tv,s^2tu^2v,s^2tu^2\}.
\]
Since $\chi(s^2t)=\chi(v)=\chi(u^2v)=\chi(s^2tu^2)=-1$, we have $x^2\notin\{s^2t,v,u^2v,s^2tu^2\}$.
Since $x\in H\cap H^{g_2}$, there exists $y\in H$ such that $x=y^{g_2}$, and so $x^2=(y^2)^{g_2}$.
Note from~\eqref{Eq25} that $u^2=(u^2v)^{g_2}$ and $(s^2tv)=(stu^2)^{g_2}$.
If $x^2\in\{u^2,s^2tv\}$, then $y^2\in \{u^2v,stu^2\}$. However, by~\eqref{Eq24} we have $\chi(u^2v)=\chi(stu^2)=-1$. Hence $x^2\notin\{u^2,s^2tv\}$, and so $x^2=s^2tu^2v$.
This together with~\eqref{Eq20} and~\eqref{Eq21} leads to
\[
\begin{cases}
\ell_1+\ell_1\equiv1\pmod2\\
\delta_1=0
\end{cases}\text{ or }\ \
\begin{cases}
k_1+(-1)^{\ell_1}(3^{\epsilon_1}m_1+\epsilon_1n_1)\equiv2\pmod4\\
m_1+(-1)^{n_1}(3^{\epsilon_1}k_1+\epsilon_1\ell_1)\equiv2\pmod4\\
n_1+\ell_1\equiv1\pmod2\\
\delta_1=1.
\end{cases}
\]
It is easy to see that the former system of equations has no solutions, and the latter has solutions precisely when $(k_1,\ell_1,m_1,n_1,\epsilon_1,\delta_1)$ is one of
\[
(2,0,0,1,0,1),\ (2,1,0,0,0,1),\ (0,1,2,0,0,1)\ \text{ and }\ (0,0,2,1,0,1).
\]
Thus $x\in \{s^2v\beta,s^2t\beta,tu^2\beta,u^2v\beta\}$. For each $y\in H$ such that $y^{g_2}=x$, we have $(y^2)^{g_2}=x^2=s^2tu^2v$, and so $y^2=stv$ by~\eqref{Eq25}. Combining this with~\eqref{Eq21} we derive that
\[
y\in\{st\alpha\beta,v\alpha\beta,s^3tu^2\alpha\beta,s^2u^2v\alpha\beta\}.
\]
However, it is straightforward to verify that
\[
y^{g_2}\in\{a^6bc^6ds^2uv\alpha\beta,a^6bc^6dstu^2\alpha\beta,a^6bc^5s^3tu\alpha\beta,a^6bc^5v\alpha\beta\},
\]
which contradicts the fact $y^{g_2}=x\in\{s^2v\beta,s^2t\beta,tu^2\beta,u^2v\beta\}$.
Therefore, all non-identity elements of $H\cap H^{g_2}$ are involutions.
As a consequence,
\[
|H\cap H^{g_1}|=|\langle uv\rangle|=2,
\]
\[
|H\cap H^{g_2}|=|\langle s^2t\rangle\times\langle u^2,v\rangle|=2\cdot4=8.\qedhere
\]
\end{proof}

\begin{lemma}\label{ISOgraph}
The digraph $\Sigma$ in Construction~$\ref{CONver-pri}$ is isomorphic to $\Cos(G,H,H\{g_1,g_2\}H)$.
\end{lemma}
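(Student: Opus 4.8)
The plan is to follow the strategy of Lemma~\ref{LEMCosdi}: first produce a bijection between the two vertex sets, then show it preserves and reflects arcs. By Lemma~\ref{LEMmaxsub}, the subgroup $H$ is maximal in $G$ with right transversal $R$ and $R\cap H=1$, so the map $\psi\colon r\mapsto Hr$ is a bijection from $R=V(\Sigma)$ onto $[G{:}H]=V(\Cos(G,H,H\{g_1,g_2\}H))$. Since $Hr_1\to Hr_2$ holds in the coset digraph exactly when $r_2r_1^{-1}\in H\{g_1,g_2\}H$, while $r_1\to r_2$ holds in $\Sigma$ exactly when $r_2r_1^{-1}\in S$, and since $r_2r_1^{-1}$ ranges over all of $R$ as $r_1,r_2$ range over $R$, the map $\psi$ is a digraph isomorphism if and only if
\[
S=R\cap\bigl(H\{g_1,g_2\}H\bigr).
\]
Thus the whole lemma reduces to this set equality; note that $g_1,g_2\notin H$ because $R\cap H=1$, so $H\{g_1,g_2\}H\subseteq G\setminus H$ and the coset digraph is genuinely irreflexive.

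First I would count the right cosets inside the connection set. By Lemma~\ref{LEMH} together with the index formula $|Hg_iH|/|H|=|H|/|H\cap H^{g_i}|$, and using $|H|=2^8$ from Lemma~\ref{LEMmaxsub}, we get $|Hg_1H|/|H|=2^8/2=128$ and $|Hg_2H|/|H|=2^8/8=32$. Because these two numbers differ, the double cosets $Hg_1H$ and $Hg_2H$ cannot coincide, and distinct double cosets are disjoint; hence $H\{g_1,g_2\}H=Hg_1H\sqcup Hg_2H$ is a disjoint union of $128+32=160$ right cosets of $H$. As $R$ is a right transversal, each such right coset meets $R$ in exactly one element, so $|R\cap(H\{g_1,g_2\}H)|=160$, which equals $|S|$ by Lemma~\ref{LEMRS}(c). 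Consequently it suffices to establish the single inclusion $S\subseteq H\{g_1,g_2\}H$; the desired equality then follows from the matching cardinalities (recall $S\subseteq R$).

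To prove this inclusion I would exploit the block decomposition of $S$ from Lemma~\ref{LEMRS}(b) and match it against the two suborbit sizes. The two $64$-element blocks $(S_1\cup S_1^{-1})(S_3\cup S_3^{-1})^\gamma$ and $(S_3\cup S_3^{-1})(S_1\cup S_1^{-1})^\gamma$ carry $128$ elements in total, and I expect each to lie in $Hg_1H$; the four $8$-element blocks $S_1S_2^\gamma$, $S_2S_1^\gamma$, $S_1^{-1}S_4^\gamma$ and $S_4(S_1^{-1})^\gamma$ carry $32$ elements in total, and I expect each to lie in $Hg_2H$. This is consistent with the chosen representatives themselves: $g_1=a^4c^5$ lies in $(S_3\cup S_3^{-1})(S_1\cup S_1^{-1})^\gamma$ since $a^4\in S_3^{-1}$ and $c^5=(a^5)^\gamma$ with $a^5\in S_1$, while $g_2=a^2c^3d^2$ lies in $S_1^{-1}S_4^\gamma$ since $a^2\in S_1^{-1}$ and $c^3d^2=(a^3b^2)^\gamma$ with $a^3b^2=(a^2b)^{-1}\in S_4$. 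Concretely, for each $r$ in a block I would produce a factorisation $r=h_1g_ih_2$ with $h_1,h_2\in H$ and the appropriate $i$, working inside the explicit permutation representation $G\cong(\PGL(2,7)\times\PGL(2,7))\rtimes C_2^2$ on $\PG(1,7)\times\PG(1,7)$ fixed before Lemma~\ref{LEMorder}.

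The main obstacle is precisely this last step: verifying the $160$ membership relations. The difficulty is organisational rather than conceptual. I would cut down the labour by using the left and right translation action of $H$, so that within each block it suffices to test one representative per $(H,H)$-double coset of $G$ meeting $R$; the symmetry $\gamma$, which is induced by $\beta\in H$ interchanging the two $\PSL(2,7)$ factors, together with inversion, pairs up the blocks and reduces the genuinely independent checks to a small number of explicit products in $\PGL(2,7)\times\PGL(2,7)$.
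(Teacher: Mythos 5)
Your proposal is correct and takes essentially the same route as the paper: the same bijection $r\mapsto Hr$ coming from the right transversal of Lemma~\ref{LEMmaxsub}, the same count $|Hg_1H|/|H|+|Hg_2H|/|H|=128+32=160=|S|$ from Lemmas~\ref{LEMH} and~\ref{LEMRS}(c) reducing everything to the single inclusion $S\subseteq Hg_1H\cup Hg_2H$, and the same element-by-element factorisations $x=hg_jk$ with $h,k\in H$, which the paper records explicitly in Tables~\ref{tab1}--\ref{tab4} (and which, note, cover exactly the representatives left over after the symmetry reduction you describe). One caution about your labour-saving step: conjugation by $\beta\in H$ (inducing $\gamma$) does preserve each double coset and legitimately pairs the blocks, but inversion does not work for the $g_2$-blocks --- since each right coset of $H$ meets $R$ exactly once, $|R\cap Hg_2H|=32$ is already exhausted by the four $8$-element blocks, whereas $(S_1S_2^\gamma)^{-1}=S_1^{-1}S_2^\gamma$ is disjoint from all of them, so $Hg_2H$ is not inverse-closed ($Hg_2^{-1}H\neq Hg_2H$); inversion may be used only within the $g_1$-part, and only after separately checking $g_1^{-1}\in Hg_1H$.
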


\begin{proof}
Let $S_1$, $S_2$, $S_3$, $S_4$ and $S$ be as in Construction~$\ref{CONver-pri}$. For each $x\in S$, as listed in Tables~\ref{tab1},~\ref{tab2},~\ref{tab3} and \ref{tab4}, a straightforward calculation verifies that $x=hg_jk$ with $h$, $k$ and $j$ given in the corresponding row.
Therefore, $S$ is a subset of $Hg_1H\cup Hg_2H$, and hence
\begin{equation}\label{Eq6}
\{Hx\mid x \in S\}\subseteq\{Hy\mid y \in Hg_1H\cup Hg_2H\}.
\end{equation}
According to Lemma~\ref{LEMH}, we have
\begin{align*}
|Hg_1H|/|H|&=|H|/|H^{g_1}\cap H|=256/2=128,\\
|Hg_2H|/|H|&=|H|/|H^{g_2}\cap H|=256/8=32.
\end{align*}
Consequently,
\begin{equation}\label{Eq11}
|\{Hy\mid y \in Hg_1H\cup Hg_2H\}|=|Hg_1H|/|H|+|Hg_2H|/|H|=128+32=160.
\end{equation}
Recall from Lemma~\ref{LEMmaxsub} that $R$ forms a right transversal of $H$ in $G$. We then conclude from $S\subseteq R$ and Lemma~\ref{LEMRS} that $|\{Hx\mid x \in S\}|=|S|=160$, which combined with~\eqref{Eq6} and~\eqref{Eq11} yields
\begin{equation}\label{Eq7}
\{Hx\mid x \in S\}=\{Hy\mid y \in Hg_1H\cup Hg_2H\}.
\end{equation}

\begin{table}[htbp]
\begin{multicols}{2}
\caption{$x\in S_1S_3^\beta$ and $j=1$}\label{tab1}
\[
\begin{array}{|l|l|l|}
\hline
x & h & k\\
\hline
ac^3 & su\beta & sv\beta\\
\hline
ad & su^2\beta & s^3tv\beta\\
\hline
acd^2 & sv\beta & s^2v\beta\\
\hline
ac^4d^2 & suv\beta & tv\beta\\
\hline
a^5c^3 & u\beta & s\beta\\
\hline
a^5d & u^2\beta & s^3t\beta\\
\hline
a^5cd^2 & v\beta & s^2\beta\\
\hline
a^5c^4d^2 & uv\beta & t\beta\\
\hline
a^6bc^3 & tu\beta & su^2v\beta \\
\hline
a^6bd & tu^2\beta & s^3tu^2v\beta\\
\hline
a^6bcd^2 & tv\beta & s^2u^2v\beta\\
\hline
a^6bc^4d^2 & tuv\beta & tu^2v\beta\\
\hline
a^6b^2c^3 & s^3u\beta & su^2\beta\\
\hline
a^6b^2d & s^3u^2\beta & s^3tu^2\beta\\
\hline
a^6b^2cd^2 & s^3v\beta & s^2u^2\beta\\
\hline
a^6b^2c^4d^2 & s^3uv\beta & tu^2\beta\\
\hline
\end{array}
\]

\columnbreak

\caption{$x\in S_1(S_3^{-1})^\beta$ and $j=1$}\label{tab2}
\[
\begin{array}{|l|l|l|}
\hline
x & h & k\\
\hline
ac^4 & s\beta & v\beta\\
\hline
ad^2 & su^2v\beta & s^3v\beta\\
\hline
a(cd^2)^{-1} & su^3\beta & stv\beta\\
\hline
a(c^4d^2)^{-1} & su^3v\beta & s^2tv\beta\\
\hline
a^5c^4 & \beta & \beta\\
\hline
a^5d^2 & u^2v\beta & s^3\beta\\
\hline
a^5(cd^2)^{-1} & u^3\beta & st\beta\\
\hline
a^5(c^4d^2)^{-1} & u^3v\beta & s^2t\beta\\
\hline
a^6bc^4 & t\beta & u^2v\beta \\
\hline
a^6bd^2 & tu^2v\beta & s^3u^2v\beta\\
\hline
a^6b(cd^2)^{-1} & tu^3\beta & stu^2v\beta\\
\hline
a^6b(c^4d^2)^{-1} & tu^3v\beta & s^2tu^2v\beta\\
\hline
a^6b^2c^4 & s^3\beta & u^2\beta\\
\hline
a^6b^2d^2 & s^3u^2v\beta & s^3u^2\beta\\
\hline
a^6b^2(cd^2)^{-1} & s^3u^3\beta & stu^2\beta\\
\hline
a^6b^2(c^4d^2)^{-1} & s^3u^3v\beta & s^2tu^2\beta\\
\hline
\end{array}
\]
\end{multicols}
\end{table}

\begin{table}[htbp]
\begin{multicols}{2}
\caption{$x\in S_1S_2^\beta$ and $j=2$}\label{tab3}
\[
\begin{array}{|l|l|l|}
\hline
x & h & k \\
\hline
acd & u^2\alpha & t\alpha \\
\hline
a(cd)^{-1} & u^3\alpha & tu^3\alpha \\
\hline
a^5cd & s^3u^2\alpha & s^3\alpha \\
\hline
a^5(cd)^{-1} & s^3u^3\alpha & s^3u^3\alpha \\
\hline
a^6bcd & su^2\alpha & \alpha \\
\hline
a^6b(cd)^{-1} & su^3\alpha & u^3\alpha \\
\hline
a^6b^2cd & s^2u^2\alpha & s\alpha \\
\hline
a^6b^2(cd)^{-1} & s^2u^3\alpha & su^3\alpha \\
\hline
\end{array}
\]

\columnbreak

\caption{$x\in S_1^{-1}S_4^\beta$ and $j=2$}\label{tab4}
\[
\begin{array}{|l|l|l|}
\hline
x & h & k\\
\hline
a^6c^2d & tu & s^3u\\
\hline
a^6(c^2d)^{-1} & t & s^3\\
\hline
a^2c^2d & u & u\\
\hline
a^2(c^2d)^{-1} & 1 & 1\\
\hline
(a^6b)^{-1}c^2d & su & s^2u\\
\hline
(a^6b)^{-1}(c^2d)^{-1} & s & s^2\\
\hline
(a^6b^2)^{-1}c^2d & s^2u & su\\
\hline
(a^6b^2)^-1(c^2d)^{-1} & s^2 & s\\
\hline
\end{array}
\]
\end{multicols}
\end{table}

Let $\psi\colon r\mapsto Hr$ be the mapping from the vertex set $R$ of $\Sigma$ to $[G{:}H]$.
Next we prove that $\psi$ is a digraph isomorphism from $\Sigma$ to $\Cos(G,H,H\{g_1,g_2\}H)$.
Since $R$ forms a right transversal of $H$ in $G$, we derive that $\psi$ is bijective.
Hence for $r_1$ and $r_2$ in $R$, we have $r_2r_1^{-1}\in S$ if and only if $Hr_2r_1^{-1}\in\{Hx\mid x \in S\}$.
By~\eqref{Eq7}, the latter condition holds if and only if $Hr_2r_1^{-1}\in\{Hy\mid y \in Hg_1H\cup Hg_2H\}$, or equivalently, $r_2r_1^{-1}\in Hg_1H\cup Hg_2H$.
Thus we conclude that $r_1\rightarrow r_2$ is an arc of $\Sigma$ if and only if $Hr_1\rightarrow Hr_2$ is an arc of $\Cos(G,H,H\{g_1,g_2\}H)$.
This shows that $\psi$ is an isomorphism from $\Sigma$ to $\Cos(G,H,H\{g_1,g_2\}H)$.
\end{proof}

Now we give the main result of this section.

\begin{theorem}\label{THEmain4}
For the digraph $\Sigma$ in Construction~$\ref{CONver-pri}$, the following hold:
\begin{enumerate}[\rm(a)]
\item $|V(\Sigma)|=441$;
\item $\Val(\Sigma)=160$;
\item $\Sigma$ is strongly connected;
\item $\Sigma$ is vertex-primitive;
\item $\Sigma$ is non-diagonalizable.
\end{enumerate}
\end{theorem}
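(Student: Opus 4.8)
The plan is to obtain parts~(a), (b), (d) immediately from the structural results already established, to settle~(c) by a generation argument, and to concentrate the real effort on~(e).

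Parts~(a) and~(b) are formal. Since $\Sigma=\Cay(R,S)$, its vertex set is $R$, so $|V(\Sigma)|=|R|=|(C_7\rtimes C_3)^2|=441$ by Lemma~\ref{LEMRS}(a); and every Cayley digraph is regular with both in- and out-valency equal to the size of the connection set, whence $\Val(\Sigma)=|S|=160$ by Lemma~\ref{LEMRS}(c). For part~(d) I would use the isomorphism $\Sigma\cong\Cos(G,H,H\{g_1,g_2\}H)$ of Lemma~\ref{ISOgraph}: the right-multiplication action of $G$ on $[G{:}H]$ embeds a copy of $G$ into $\Aut(\Sigma)$, and since $H$ is maximal in $G$ by Lemma~\ref{LEMmaxsub}, this coset action is primitive. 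A transitive permutation group that contains a primitive subgroup is itself primitive, so $\Aut(\Sigma)$ is primitive on $V(\Sigma)$ and $\Sigma$ is vertex-primitive.

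For part~(c), a finite Cayley digraph is strongly connected precisely when its connection set generates the whole group, so it suffices to prove $\langle S\rangle=R$. Write $R=R_1\times R_2$ with $R_1=\langle a,b\rangle$ and $R_2=\langle c,d\rangle$, and let $\pi_1,\pi_2$ be the two projections. Each element of $S$ is a product of an $R_1$-part and an $R_2$-part, so $\pi_1(S)$ contains $S_1=\{a,a^5,a^6b,a^6b^2\}$; as $a\cdot a^6b=b$, we get $\langle\pi_1(S)\rangle=\langle a,b\rangle=R_1$, and symmetrically $\langle\pi_2(S)\rangle=R_2$ via $\gamma$. Thus $\langle S\rangle$ is a subdirect product of $R_1\times R_2$. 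The only proper nontrivial normal subgroup of $C_7\rtimes C_3$ is its $C_7$, so by Goursat's lemma any \emph{proper} subdirect product of $R_1\times R_2$ has image of size exactly $3$ in the quotient $(R_1/C_7)\times(R_2/C_7)\cong C_3\times C_3$. However $S$ contains elements whose images there are $(1,0)$ and $(0,1)$ (for instance $a^6bc^3$ and $ad$, both listed in Table~\ref{tab1}), and these generate all of $C_3\times C_3$. Hence $\langle S\rangle$ cannot be proper, so $\langle S\rangle=R$ and $\Sigma$ is strongly connected.

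Part~(e) is the heart of the theorem, and I would attack it through Lemma~\ref{LEMrepre2} by exhibiting a single representation $\rho$ of $R$ with $\rho(S)$ non-diagonalizable. Since $R=R_1\times R_2$, its irreducibles are outer products $\rho=\sigma_1\otimes\sigma_2$ with $\sigma_i$ irreducible on the $i$-th factor, and the $1$-dimensional ones make $\rho(S)$ a scalar; so I would take each $\sigma_i$ to be the (essentially unique) $3$-dimensional irreducible of $C_7\rtimes C_3$, in which $a$ acts as $\mathrm{diag}(\omega,\omega^2,\omega^4)$ with $\omega=e^{2\pi i/7}$ and $b$ cyclically permutes the three eigenlines, choosing $\sigma_2=\sigma_1\circ(\gamma|_{R_1})^{-1}$ so that $\sigma_2(w^\gamma)=\sigma_1(w)$ for $w\in R_1$. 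Because $\gamma$ interchanges the two factors and each summand of $S$ is an $R_1$-set times the $\gamma$-image of an $R_1$-set, Lemma~\ref{LEMtenpro}(a) collapses $\rho(S)$ into a sum of Kronecker products of $3\times3$ blocks: writing $[T]=\sigma_1(T)$,
\begin{align*}
\rho(S)={}&[S_1\cup S_1^{-1}]\otimes[S_3\cup S_3^{-1}]+[S_3\cup S_3^{-1}]\otimes[S_1\cup S_1^{-1}]\\
&+[S_1]\otimes[S_2]+[S_2]\otimes[S_1]+[S_1^{-1}]\otimes[S_4]+[S_4]\otimes[S_1^{-1}].
\end{align*}
I would then compute the six matrices $[S_1],[S_1^{-1}],[S_2],[S_3\cup S_3^{-1}],[S_4]$ explicitly, assemble this $9\times9$ matrix, and verify that its Jordan canonical form contains a block of size at least $2$. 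This final verification---checking that the specific sets $S_1,\dots,S_4$ were chosen so that $\rho(S)$ is genuinely defective (some eigenvalue whose geometric multiplicity falls short of its algebraic multiplicity) rather than merely diagonalizable---is the main obstacle, and the point where the arithmetic in $\bbF_7$ and the cyclotomic entries must be handled with care. Once one such nontrivial Jordan block is produced, Lemma~\ref{LEMrepre2} yields the non-diagonalizability of $\Sigma$, completing the proof.
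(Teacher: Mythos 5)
Parts (a), (b) and (d) of your proposal are correct and match the paper: (a) and (b) are read off from Lemma~\ref{LEMRS}, and (d) follows from Lemmas~\ref{LEMmaxsub} and~\ref{ISOgraph} exactly as you say (the paper cites \cite[Lemma~2.5.1]{GR2001}, which is your observation that a transitive group containing a primitive subgroup is primitive). Your part (c) is a genuinely different route from the paper's: the paper simply exhibits generators inside $\langle S\rangle$, namely $a=(a^5cd)^3$, $b=(cb)^7$, $c=(cb)b^{-1}$ and $d=a^{-1}(ad)$ with $a^5cd\in S_1S_2^\gamma$, $cb\in S_1^\gamma S_3$ and $ad\in S_1S_3^\gamma$, whereas you argue via the two projections and Goursat's lemma. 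Your argument checks out: $\pi_1(S)\supseteq S_1$ gives $\langle\pi_1(S)\rangle=R_1$ (and symmetrically for $\pi_2$), every proper subdirect subgroup of $(C_7\rtimes C_3)^2$ does have image of order exactly $3$ in $C_3\times C_3$ (the only proper nontrivial normal subgroup of $C_7\rtimes C_3$ being its $C_7$, the Goursat quotient is either $C_3$ or the whole group, and in both cases the image in $C_3\times C_3$ is a diagonal of order $3$), and $a^6bc^3,\,ad\in S_1S_3^\gamma\subseteq S$ have images $(1,0)$ and $(0,1)$. This buys a structural proof at the cost of invoking Goursat, while the paper's is a four-line verification; both are fine.

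For part (e), however, there is a genuine gap. Your reduction coincides exactly with the paper's: your $\sigma_1$ is the paper's $\varphi$ (defined by $(\omega^j)^{\varphi(a^kb^\ell)}=\zeta^{k2^j}\omega^{j-\ell}$, i.e.\ $a\mapsto\mathrm{diag}(\zeta,\zeta^2,\zeta^4)$ with $b$ permuting the eigenlines), your condition $\sigma_2(w^\gamma)=\sigma_1(w)$ makes $\rho=\sigma_1\otimes\sigma_2$ the paper's $\varphi\otimes(\varphi\circ\beta)$, and your six-term Kronecker decomposition of $\rho(S)$ is the paper's displayed formula. But you then stop at ``verify that its Jordan canonical form contains a block of size at least~2,'' explicitly flagging this as the main obstacle. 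That verification is not an afterthought; it is the entire content of part (e), since nothing in the setup guarantees that this particular $\rho(S)$ is defective --- that is precisely how the sets $S_1,\dots,S_4$ were engineered, and a blind proof cannot assume it. The paper's proof of (e) consists exactly of carrying out this computation computer-free: it conjugates $\rho(S)$ by an explicit $T_1\otimes T_2$ (with $T_1,T_2\in M_{3\times3}(\mathbb{C})$ given in~\eqref{Eq14}--\eqref{Eq15}) and then by $I_3\oplus T_3$ to show $\rho(S)$ is similar to $\frac12(A\oplus B\oplus C\oplus D)$ with $C=\left(\begin{smallmatrix}-16&24\\0&-16\end{smallmatrix}\right)$, i.e.\ a nontrivial Jordan block at the eigenvalue $-8$. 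Until you produce the $9\times9$ matrix together with such a certificate of defectiveness (an eigenvalue whose geometric multiplicity is smaller than its algebraic one, established by an explicit rank computation or change of basis), your part (e) is a correct plan rather than a proof. One further small slip: the $3$-dimensional irreducible of $C_7\rtimes C_3$ is not ``essentially unique'' --- there are two inequivalent ones, corresponding to the orbits $\{1,2,4\}$ and $\{3,5,6\}$ of the squaring map on the nontrivial characters of $C_7$ --- although your explicit choice, paired via $\gamma$, is the one the paper uses, so your target matrix is the right one.
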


\begin{proof}
Since $\Sigma=\Cay(R,S)$, parts~(a) and~(b) follow directly from Lemma~\ref{LEMRS}.
It is straightforward to verify that $a=(a^5cd)^3$, $b=(cb)^7$, $c=(cb)b^{-1}$ and $d=a^{-1}(ad)$.
Since
\[
a^5cd\in S_1S_2^\beta\subseteq S,\ \ cb\in S_1^\beta S_3\subseteq S\ \text{ and }\ ad\in S_1S_3^\beta\subseteq S,
\]
we see that $R=\langle a,b,c,d\rangle\leq\langle S\rangle\leq R$, and so $R=\langle S\rangle$.
This implies that $\Sigma=\Cay(R,S)$ is connected, and so $\Sigma$ is strongly connected (see~\cite[Lemma~2.6.1]{GR2001}). This proves part~(c).
Part~(d) follows from Lemmas~\ref{LEMmaxsub}, ~\ref{ISOgraph} and~\cite[Lemma~2.5.1]{GR2001}.

It remains to prove part~(e). Let $\omega$ be an element of $\mathbb{F}_7^\times$ with order $3$,
let $\zeta\in\mathbb{C}$ be a primitive $7$-th root of unity,
let $V$ be the underlying vector space of the group algebra $\mathbb{C}[\langle\omega\rangle]$,
and let $\varphi(a^kb^\ell)$ be the linear transformation on $V$ such that
\begin{equation}\label{Eq8}
(\omega^j)^{\varphi(a^kb^\ell)}=\zeta^{k2^j}\omega^{j-\ell}\ \text{ for all }j,\ell\in\{0,1,2\}\text{ and }k\in\{0,1,\dots,6\}.
\end{equation}
It follows from $a^{b^{-1}}=a^4$ that
\begin{align*}
(\omega^j)^{\varphi(a^{k_1}b^{\ell_1}a^{k_2}b^{\ell_2})}
&=(\omega^j)^{\varphi(a^{k_1+4^{\ell_1}k_2}b^{\ell_1+\ell_2})}\\
&=\zeta^{(k_1+4^{\ell_1}k_2)2^j}\omega^{j-(\ell_1+\ell_2)}\\
&=\zeta^{k_12^j+2^{j+2\ell_1}k_2}\omega^{j-\ell_1-\ell_2}\\
&=\zeta^{k_12^j+2^{j+2\ell_1-3\ell_1}k_2}\omega^{j-\ell_1-\ell_2}\\
&=\zeta^{k_12^j}\big(\zeta^{2^{j-\ell_1}k_2}\omega^{(j-\ell_1)-\ell_2}\big)\\
&=(\omega^j)^{\varphi(a^{k_1}b^{\ell_1})\varphi(a^{k_2}b^{\ell_2})}.
\end{align*}
Hence $\varphi$ is a representation of $\langle a,b\rangle$ on $V$.
Moreover, since $\beta$ swaps $a$ with $c$ and swaps $b$ with $d$, it follows that $\varphi\circ\beta$ is a representation of $\langle c,d\rangle$ on $V$.
Thus $\rho:=\varphi\otimes(\varphi\circ\beta)$ is a representation of $\langle a,b\rangle\times\langle c,d\rangle=R$ on $V$.
For $X,Y\subseteq\langle a,b\rangle$, we have
\[
\rho(XY^\beta)=\varphi(X)\otimes(\varphi\circ\beta)(Y^\beta)=\varphi(X)\otimes\varphi(Y).
\]
Then Lemma~\ref{LEMRS}(b) implies that
\begin{align*}
\rho(S)&=\rho\big((S_1\sqcup S_1^{-1})(S_3\sqcup S_3^{-1})^\beta\sqcup(S_3\sqcup S_3^{-1})(S_1\sqcup S_1^{-1})^\beta\\
&\phantom{=}\sqcup S_1S_2^\beta\sqcup S_2S_1^\beta \sqcup S_1^{-1}S_4^\beta\sqcup S_4(S_1^{-1})^\beta\big)\\
&=\rho\big((S_1\sqcup S_1^{-1})(S_3\sqcup S_3^{-1})^\beta\big)+\rho\big((S_3\sqcup S_3^{-1})(S_1\sqcup S_1^{-1})^\beta\big)\\
&\phantom{=}+\rho(S_1S_2^\beta)+\rho(S_2S_1^\beta)+\rho(S_1^{-1}S_4^\beta)+\rho\big(S_4(S_1^{-1})^\beta\big)\\
&=\varphi(S_1\sqcup S_1^{-1})\otimes\varphi(S_3\sqcup S_3^{-1})+\varphi(S_3\sqcup S_3^{-1})\otimes\varphi(S_1\sqcup S_1^{-1})\\
&\phantom{=}+\varphi(S_1)\otimes\varphi(S_2)+\varphi(S_2)\otimes\varphi(S_1)+\varphi(S_1^{-1})\otimes\varphi(S_4)+\varphi(S_4)\otimes\varphi(S_1^{-1})\\
&=\big(\varphi(S_1)+\varphi(S_1^{-1})\big)\otimes\big(\varphi(S_3)+\varphi(S_3^{-1})\big)+\big(\varphi(S_3)+\varphi(S_3^{-1})\big)
\otimes\big(\varphi(S_1)+\varphi(S_1^{-1})\big)\\
&\phantom{=}+\varphi(S_1)\otimes\varphi(S_2)+\varphi(S_2)\otimes\varphi(S_1)+\varphi(S_1^{-1})\otimes\varphi(S_4)+\varphi(S_4)\otimes\varphi(S_1^{-1}).
\end{align*}
Thus for all invertible matrices $T,Q\in M_{3\times3}(\mathbb{C})$, we conclude from Lemma~\ref{LEMtenpro} that
\begin{equation}\label{Eq13}
\begin{aligned}
&\phantom{=}(T\otimes Q)^{-1}\big(\rho(S)\big)(T\otimes Q)\\
&=(T^{-1}\otimes Q^{-1})\big(\rho(S)\big)(T\otimes Q)\\
&=\bigg(\Big(T^{-1}\big(\varphi(S_1)+\varphi(S_1^{-1})\big)T\Big)\otimes\Big(Q^{-1}\big(\varphi(S_3)+\varphi(S_3^{-1})\big)Q\Big)\bigg)\\
&\phantom{=}+\bigg(\Big(T^{-1}\big(\varphi(S_3)+\varphi(S_3^{-1})\big)T\Big)\otimes\Big(Q^{-1}\big(\varphi(S_1)+\varphi(S_1^{-1})\big)Q\Big)\bigg)\\
&\phantom{=}+\Big(\big(T^{-1}\varphi(S_1)T\big)\otimes\big(Q^{-1}\varphi(S_2)Q\big)\Big)+
\Big(\big(T^{-1}\varphi(S_2)T\big)\otimes\big(Q^{-1}\varphi(S_1)Q\big)\Big)\\
&\phantom{=}+\Big(\big(T^{-1}\varphi(S_1^{-1})T\big)\otimes\big(Q^{-1}\varphi(S_4)Q\big)\Big)+
\Big(\big(T^{-1}\varphi(S_4)T\big)\otimes\big(Q^{-1}\varphi(S_1^{-1})Q\big)\Big).
\end{aligned}
\end{equation}
Moreover, for all $i\in\{0,1,2\}$, we derive from~\eqref{Eq8} that
\begin{align*}
(\omega^i)^{\varphi(S_1)}
&=(\omega^i)^{\varphi(a)}+(\omega^i)^{\varphi(a^5)}+(\omega^i)^{\varphi(a^6b)}+(\omega^i)^{\varphi(a^6b^2)}\\
&=\zeta^{2^i}\omega^i+\zeta^{5\cdot2^i}\omega^i+\zeta^{6\cdot2^i}\omega^{i-1}+\zeta^{6\cdot2^i}\omega^{i-2},\\
(\omega^i)^{\varphi(S_1^{-1})}
&=(\omega^i)^{\varphi(a^6)}+(\omega^i)^{\varphi(a^2)}+(\omega^i)^{\varphi(a^2b^2)}+(\omega^i)^{\varphi(a^4b)}\\
&=\zeta^{6\cdot2^i}\omega^i+\zeta^{2\cdot2^i}\omega^i+\zeta^{2\cdot2^i}\omega^{i-2}+\zeta^{4\cdot2^i}\omega^{i-1},\\
(\omega^i)^{\varphi(S_2)}
&=(\omega^i)^{\varphi(ab)}+(\omega^i)^{\varphi((ab)^{-1})}=(\omega^i)^{\varphi(ab)}+(\omega^i)^{\varphi(a^5b^2)}
=\zeta^{2^i}\omega^{i-1}+\zeta^{5\cdot2^i}\omega^{i-2},\\
(\omega^i)^{\varphi(S_3)}
&=(\omega^i)^{\varphi(a^3)}+(\omega^i)^{\varphi(b)}+(\omega^i)^{\varphi(ab^2)}+(\omega^i)^{\varphi(a^4b^2)}\\
&=\zeta^{3\cdot2^i}\omega^i+\omega^{i-1}+\zeta^{2^i}\omega^{i-2}+\zeta^{4\cdot2^i}\omega^{i-2},\\
(\omega^i)^{\varphi(S_3^{-1})}
&=(\omega^i)^{\varphi(a^4)}+(\omega^i)^{\varphi(b^2)}+(\omega^i)^{\varphi(a^3b)}+(\omega^i)^{\varphi(a^5b)}\\
&=\zeta^{4\cdot2^i}\omega^i+\omega^{i-2}+(\zeta^{3\cdot2^i}+\zeta^{5\cdot2^i})\omega^{i-1},\\
(\omega^i)^{\varphi(S_4)}
&=(\omega^i)^{\varphi(a^2b)}+(\omega^i)^{\varphi\big((a^2b)^{-1}\big)}=(\omega^i)^{\varphi(a^2b)}+(\omega^i)^{\varphi(a^3b^2)}
=\zeta^{2\cdot2^i}\omega^{i-1}+\zeta^{3\cdot2^i}\omega^{i-2}.
\end{align*}
Hence with respect to the basis $1,\omega,\omega^2$ of $V$ we conclude that
\begin{equation}\label{Eq16}
\begin{aligned}
\varphi(S_1)+\varphi(S_1^{-1})&=
\begin{pmatrix}
\zeta^5+\zeta & \zeta^6 & \zeta^6\\
\zeta^5 & \zeta^2+\zeta^3 & \zeta^5\\
\zeta^3 & \zeta^3 & \zeta^6+\zeta^4\\
\end{pmatrix}
+
\begin{pmatrix}
\zeta^6+\zeta^2 & \zeta^2 & \zeta^4\\
\zeta & \zeta^5+\zeta^4 & \zeta^4\\
\zeta & \zeta^2 & \zeta^3+\zeta
\end{pmatrix},\\
\varphi(S_3)+\varphi(S_3^{-1})&=
\begin{pmatrix}
\zeta^3 & \zeta^4+\zeta & 1\\
1 & \zeta^6 & \zeta^2+\zeta\\
\zeta^4+\zeta^2 & 1 & \zeta^5\\
\end{pmatrix}
+
\begin{pmatrix}
\zeta^4 & 1 & \zeta^5+\zeta^3\\
\zeta^6+\zeta^3 & \zeta & 1\\
1 & \zeta^6+\zeta^5 & \zeta^2\\
\end{pmatrix},\\
\varphi(S_2)&=
\begin{pmatrix}
0 & \zeta^5 & \zeta\\
\zeta^2 & 0 & \zeta^3\\
\zeta^6 & \zeta^4 & 0\\
\end{pmatrix}
\ \text{ and }\ \
\varphi(S_4)=
\begin{pmatrix}
0 & \zeta^3 & \zeta^2\\
\zeta^4 & 0 & \zeta^6\\
\zeta^5 & \zeta & 0\\
\end{pmatrix}.
\end{aligned}
\end{equation}
Let $x_1=\zeta^4+\zeta^3+\zeta+1$, $x_2=\zeta^4-2\zeta^3-2\zeta^2-2\zeta+1$, and $x_3=\zeta^5+2\zeta^4+4\zeta^3+2\zeta^2+\zeta$, and let
\begin{equation}\label{Eq14}
\begin{aligned}
T_1&=\frac{1}{14}
\begin{pmatrix}
1 & -6 & -1\\
\zeta^5+\zeta & -2(2x_3-7\zeta^3) & -(3x_3-7\zeta^3)\\
x_1 & 2(x_1+2\zeta^4+\zeta^2+2) & x_2\\
\end{pmatrix},\\
T_2&=\frac{1}{14}
\begin{pmatrix}
-2 & -2 & 2\\
-2(\zeta^5+\zeta) & -2(3x_3-7\zeta^3) & -x_3\\
-2x_1 & 2x_2 & -\big(2\zeta^4+3(\zeta^3+\zeta^2+\zeta)+2\big)\\
\end{pmatrix}.
\end{aligned}
\end{equation}
It is straightforward to verify that
\begin{equation}\label{Eq15}
\begin{aligned}
T_1^{-1}&=
\begin{pmatrix}
-2(\zeta^4+\zeta^3-2) & 2(\zeta^6-\zeta^5-\zeta^3+\zeta^2) & 2(\zeta^6+\zeta^4-\zeta^2-\zeta)\\
-(\zeta^4+\zeta^3+3) & \zeta^6+\zeta^2 & \zeta^6+\zeta^4\\
4(\zeta^4+\zeta^3+2) & -2(\zeta^6-\zeta^4+\zeta^2-\zeta-1) & -2(\zeta^6-\zeta^5+\zeta^4-\zeta^3-1)\\
\end{pmatrix},\\
T_2^{-1}&=
\begin{pmatrix}
\zeta^4+\zeta^3-2 & -(\zeta^6-\zeta^5-\zeta^3+\zeta^2) & -(\zeta^6+\zeta^4-\zeta^2-\zeta)\\
\zeta^4+\zeta^3+1 & \zeta^4+\zeta+1 & \zeta^5+\zeta^3+1\\
2(\zeta^4+\zeta^3+3) & -2(\zeta^6+\zeta^2) & -2(\zeta^6+\zeta^4)\\
\end{pmatrix}.
\end{aligned}
\end{equation}
Moreover, let
\begin{align*}
y_1&=-93506(\zeta^5+\zeta^2)-152738(\zeta^4+\zeta^3)-147903,\\
y_2&=-9177(\zeta^5+\zeta^2)-13557(\zeta^4+\zeta^3)-58289,\\
y_3&=56798(\zeta^5+\zeta^2)+98510(\zeta^4+\zeta^3)-85253,\\
y_4&=75152(\zeta^5+\zeta^2)+125624(\zeta^4+\zeta^3)+31325,
\end{align*}
\[
T_3=
\begin{pmatrix}
0 & 4 & 0 & 0 & 4 & -1\\
0 & 4 & 2 & 0 & 0 & 1\\
0 & 455836 & y_1 & 0 & 2y_1+455836 & y_1+113959 \\
0 & 8y_2 & -y_1 & 0 & 2y_3 & y_4\\
1 & 0 & 0 & 0 & 0 & 0\\
1 & 0 & 0 & 1 & 0 & 0\\
\end{pmatrix}.
\]
By a straightforward calculation, we deduce from~\eqref{Eq13}--\eqref{Eq15} that
\begin{align*}
&\phantom{=}\begin{pmatrix}
I_3 & 0\\
0 & T_3\\
\end{pmatrix}
(T_1\otimes T_2)^{-1}\big(\rho(S)\big)(T_1\otimes T_2)\\
&=
\begin{pmatrix}
I_3 & 0\\
0 & T_3\\
\end{pmatrix}
\Bigg(\bigg(\Big(T_1^{-1}\big(\varphi(S_1)+\varphi(S_1^{-1})\big)T_1\Big)\otimes\Big(T_2^{-1}\big(\varphi(S_3)+\varphi(S_3^{-1})\big)T_2\Big)\bigg)\\
&\phantom{=}+\bigg(\Big(T_1^{-1}\big(\varphi(S_3)+\varphi(S_3^{-1})\big)T_1\Big)\otimes\Big(T_2^{-1}\big(\varphi(S_1)
+\varphi(S_1^{-1})\big)T_2\Big)\bigg)\\
&\phantom{=}+\Big(\big(T_1^{-1}\varphi(S_1)T_1\big)\otimes\big(T_2^{-1}\varphi(S_2)T_2\big)\Big)+
\Big(\big(T_1^{-1}\varphi(S_2)T_1\big)\otimes\big(T_2^{-1}\varphi(S_1)T_2\big)\Big)\\
&\phantom{=}+\Big(\big(T_1^{-1}\varphi(S_1^{-1})T_1\big)\otimes\big(T_2^{-1}\varphi(S_4)T_2\big)\Big)+
\Big(\big(T_1^{-1}\varphi(S_4)T_1\big)\otimes\big(T_2^{-1}\varphi(S_1^{-1})T_2\big)\Big)\Bigg)\\
&=\frac{1}{2}
\begin{pmatrix}
A & 0 & 0 &0\\
0 & B & 0 &0\\
0 & 0 & C &0\\
0 & 0 & 0 &D\\
\end{pmatrix}
\begin{pmatrix}
I_3 & 0\\
0 & T_3\\
\end{pmatrix},
\end{align*}
where
$
A=
\begin{pmatrix}
-16 & 0 & 0\\
0 & 10 & 5\\
0 & 40 & 10\\
\end{pmatrix},\
B=
\begin{pmatrix}
8 & 0\\
0 & 8\\
\end{pmatrix},\
C=
\begin{pmatrix}
-16 & 24\\
0 & -16\\
\end{pmatrix}
\ \text{ and }\
D=
\begin{pmatrix}
0 & -10\\
-10 & 20\\
\end{pmatrix}.
$
Since $C$ is non-diagonalizable, we conclude that $(T_1\otimes T_2)^{-1}\big(\rho(S)\big)(T_1\otimes T_2)$ is non-diagonalizable as $T_3$ is invertible, and hence $\rho(S)$ is non-diagonalizable. By Lemma~\ref{LEMrepre2}, this implies that $\Sigma$ is non-diagonalizable, which completes the proof of part~(e).
\end{proof}

\section{Concluding remarks}\label{sec5}

Let $\Gamma_s$ and $\Sigma$ be as in Constructions~\ref{CONs-arc} and~\ref{CONver-pri}, respectively. According to Theorem~\ref{THEmain3}, the digraph $\Gamma_s$ is non-diagonalizable and $s$-arc-transitive for each positive integer $s\geq2$. Moreover, Theorem~\ref{THEmain4} asserts that $\Sigma$ is non-diagonalizable and vertex-primitive.
Combining these with Lemma~\ref{LEMTensorofdi}, we obtain Theorem~\ref{THEinfinite} immediately.

Besides the properties listed in Theorem~\ref{THEinfinite}, we remark that $\Sigma^{\times n}$ is connected since it is vertex-primitive (for otherwise its connected components would form an invariant partition). Moreover, since the out-valency of $\Gamma_s$ is $2$, it follows that the out-valency of $\Gamma_s^{\times n}$ is $2^n$. Hence $\Gamma_s^{\times n}$ is not a disjoint union of digraphs isomorphic to $\Gamma_s^{\times m}$ for any $m<n$. This means that $\Gamma_s^{\times n}$ with $n\geq 1$ is a genuine infinite family of digraphs.

We also remark that the digraph $\Sigma$ in Construction~\ref{CONver-pri} was first discovered by computer search in {\sc Magma}~\cite{Magma}. Although the proof of all the properties of $\Sigma$ in this paper is computer-free, the arguments therein (mostly calculations) have been confirmed by computation in {\sc Magma}~\cite{Magma}.
Further computation in {\sc Magma}~\cite{Magma} shows that $\Gamma_2$ has the smallest order among non-diagonalizable $2$-arc-transitive digraphs (note that $\Gamma_2$ has order $16$), while the smallest order among non-diagonalizable $3$-arc-transitive digraphs is $20$ (note that $\Gamma_3$ has order $48$). Thus a natural question to ask is as follows.

\begin{question}
For $s\geq4$, what is the smallest order of a non-diagonalizable $s$-arc-transitive digraph?
\end{question}

In a similar fashion, one may ask:

\begin{question}
What is the smallest order of a non-diagonalizable vertex-primitive digraph?
\end{question}
Recall that the digraph $\Sigma$ has $441$ vertices (see Lemma~\ref{LEMRS}(a)).
By a non-exhaustive search in {\sc Magma}~\cite{Magma} for non-diagonalizable vertex-primitive digraphs $\Gamma$ of order smaller than $441$, we obtain the following examples $\Gamma=\Cos(G,H,D)$:
\begin{enumerate}[\rm (a)]
\item $|V(\Gamma)|=153$, $G\cong\PSL(2,17)$, $H\cong D_{16}$ and $D=H\{g_1,g_2\}H$ with $g_1,g_2\in G$, where exactly one of $Hg_1H$ and $Hg_2H$ is inverse-closed;
\vskip0.08in
\item $|V(\Gamma)|=165$, $G\cong M_{11}$, $H\cong\GL(2,3)$ and $D=H\{g_1,g_2,g_3\}H$ with $g_1,g_2,g_3\in G$, where exactly one of $Hg_1H$, $Hg_2H$ and $Hg_3H$ is inverse-closed;
\vskip0.08in
\item $|V(\Gamma)|=234$, $G\cong\PSL(3,3)$, $H\cong \Sym(4)$ and $D=H\{g_1,g_2\}H$ with $g_1,g_2\in G$, where neither $Hg_1H$ nor $Hg_2H$ is inverse-closed;
\vskip0.08in
\item $|V(\Gamma)|=325$, $G\cong\PSL(2,25)$, $H\cong D_{24}$ and $D=H\{g_1,g_2\}H$ with $g_1,g_2\in G$, where exactly one of $Hg_1H$ and $Hg_2H$ is inverse-closed.
\end{enumerate}

It is worth remarking that none of the digraphs in~(a)--(d) is Cayley or arc-transitive, and we do not know any computer-free proof of the non-diagonalizability of them.
Moreover, computation in {\sc Magma}~\cite{Magma} shows that there is no non-diagonalizable vertex-primitive arc-transitive digraph with no more than 1000 vertices.
In light of this, we would like to propose the following conjecture.
\begin{conjecture}
Every vertex-primitive arc-transitive digraph is diagonalizable.
\end{conjecture}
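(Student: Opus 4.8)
The plan is to recast the problem in the language of coset digraphs and centralizer algebras, and then to isolate the one genuinely hard point. Since $\Gamma$ is vertex-primitive, the group $G=\Aut(\Gamma)$ acts primitively on $V(\Gamma)$, so a vertex stabilizer $H$ is a maximal subgroup of $G$; since $\Gamma$ is moreover arc-transitive, $G$ is transitive on the arcs and hence $\Gamma\cong\Cos(G,H,HgH)$ for a single double coset $HgH$, exactly as in the set-up of Section~\ref{sec2}. First I would decompose the permutation module $\mathbb{C}[G{:}H]=\bigoplus_i m_iV_i$ into its isotypic components, where the $V_i$ are pairwise non-isomorphic irreducible $\mathbb{C}G$-modules. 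The adjacency matrix $A=A(\Gamma)$ lies in the centralizer algebra $\mathrm{End}_{\mathbb{C}G}(\mathbb{C}[G{:}H])\cong\prod_i M_{m_i\times m_i}(\mathbb{C})$, so $A$ corresponds to a tuple $(A_i)_i$ with $A_i\in M_{m_i\times m_i}(\mathbb{C})$, and $\Gamma$ is diagonalizable if and only if every $A_i$ is diagonalizable. This is the coset-digraph analogue of Lemmas~\ref{LEMrepre1} and~\ref{LEMrepre2}; the constituents with $m_i=1$ contribute scalars by Schur's lemma and are automatically harmless, so the entire difficulty is confined to the isotypic components of multiplicity $m_i\geq2$.

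The crux is therefore to show that arc-transitivity together with primitivity forces each $A_i$ on a multiplicity space to be diagonalizable. The most natural target is the stronger statement that $\Gamma$ is \emph{normal}, that is $AA^{\top}=A^{\top}A$: a real normal matrix is diagonalizable over $\mathbb{C}$, and since $A^{\top}$ is the adjacency matrix of the reverse digraph $\Cos(G,H,Hg^{-1}H)$, the identity $AA^{\top}=A^{\top}A$ is equivalent to the equality, orbital by orbital, of the number of common out-neighbours and the number of common in-neighbours of a pair of vertices. Both $AA^{\top}$ and $A^{\top}A$ are symmetric, lie in the centralizer algebra, share the same diagonal (by vertex-transitivity, which holds since $\Gamma$ is primitive hence regular) and the same spectrum (since $MN$ and $NM$ are isospectral for square $M,N$ of equal size); the task is to upgrade ``same spectrum'' to ``equal'' using primitivity. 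Equivalently, I would try to prove that the subalgebra of the centralizer algebra generated by $I$, $A$ and $A^{\top}$ is commutative, i.e.\ that the associated orbital configuration forms a commutative association scheme.

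The hard part, and the reason this remains only a conjecture, is precisely the multiplicity-$\geq2$ constituents: semisimplicity of the centralizer algebra is of no help here, since each factor $M_{m_i\times m_i}(\mathbb{C})$ with $m_i\geq2$ already contains non-diagonalizable elements, and the two examples in this paper show that neither hypothesis can be dropped --- the imprimitive arc-transitive digraphs $\Gamma_s$ of Construction~\ref{CONs-arc} and the vertex-primitive but arc-intransitive (two-double-coset) digraph $\Sigma$ of Construction~\ref{CONver-pri} are both non-diagonalizable, hence non-normal. Any proof must thus use primitivity and the single-double-coset condition simultaneously, and I expect no elementary argument to suffice: the realistic route is to invoke the O'Nan--Scott theorem together with the classification of finite simple groups, reduce to the almost simple case, and analyse the possible arc-transitive orbitals there. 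I expect the almost simple case to be the decisive obstacle, since it is exactly the regime in which rich primitive actions with repeated irreducible constituents occur, and where one must rule out, for every such action, the survival of a nontrivial Jordan block in some $A_i$.
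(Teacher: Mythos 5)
You have not proved the statement, and nothing in the paper does either: this is stated as a \emph{conjecture} in Section~\ref{sec5}, supported only by computational evidence --- a non-exhaustive {\sc Magma} search showing no non-diagonalizable vertex-primitive arc-transitive digraph on at most $1000$ vertices, plus the observation that the known small non-diagonalizable vertex-primitive examples (items (a)--(d) there, and $\Sigma$ itself) all need at least two double cosets, hence are not arc-transitive. So there is no proof in the paper to compare against, and your proposal, by its own admission, halts exactly at the open point. The reductions you do carry out are correct: primitivity makes the vertex stabilizer $H$ maximal in $G=\Aut(\Gamma)$, arc-transitivity gives $\Gamma\cong\Cos(G,H,HgH)$ with a single double coset, $A(\Gamma)$ lies in $\mathrm{End}_{\mathbb{C}G}(\mathbb{C}[G{:}H])\cong\prod_i M_{m_i\times m_i}(\mathbb{C})$, and $\Gamma$ is diagonalizable if and only if each component $A_i$ is --- this is the coset-digraph analogue of the mechanism behind Lemmas~\ref{LEMrepre1} and~\ref{LEMrepre2}, which the paper exploits in the opposite direction (to \emph{produce} non-diagonalizable examples). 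Your reading of the two constructions as showing that neither hypothesis can be dropped is also accurate.

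The genuine gap is the step you flag yourself: nothing in the proposal actually uses primitivity together with the single-double-coset condition to control the constituents $A_i$ with $m_i\geq2$, and that is the entire content of the conjecture. Two further cautions about the route you sketch. First, your intermediate target $AA^{\top}=A^{\top}A$ is strictly stronger than the conjecture: normality implies diagonalizability but not conversely, so a vertex-primitive arc-transitive digraph could in principle be diagonalizable yet non-normal, in which case the normality program would fail even though the conjecture is true; you would need either evidence that normality holds in this setting or a fallback that targets diagonalizability directly. Second, the isospectrality of $AA^{\top}$ and $A^{\top}A$ holds for \emph{any} pair of square matrices $M,N$ of equal size applied to $M=A$, $N=A^{\top}$, so it carries no information specific to primitivity or arc-transitivity and gives no leverage toward upgrading ``same spectrum'' to ``equal''; any successful argument must inject group-theoretic input (plausibly O'Nan--Scott and an almost simple analysis, as you suggest) at precisely this point. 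As submitted, the proposal is a sensible research plan with a correctly built frame, but the decisive step is absent --- which is exactly why the statement remains a conjecture in the paper.
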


\noindent\textsc{Acknowledgement.}
The authors are grateful to the anonymous referees and Shasha Zheng for their valuable comments and suggestions.
The work was done during a visit of the fourth author to The University of Melbourne. The fourth author would like to thank The University of Melbourne for its hospitality and Beijing Normal University for consistent support.
The first author was supported by the Melbourne Research Scholarship provided by The University of Melbourne.
The fourth author was supported by the scholarship No.~202106040068 from the China Scholarship Council.

\end{document}